\newcommand\D{\mathrm{d}}
\newcommand\N{\mathbb{N}}
\newcommand\R{\mathbb{R}}
\newtheorem{theorem}{Theorem}[section]
\newtheorem{lemma}{Lemma}[section]
\newtheorem{corollary}{Corollary}[section]
\newtheorem{remark}{Remark}[section]
\newtheorem{proposition}{Proposition}[section]
\newtheorem{assumption}{Assumption}
\title{Optimal optical conditions for Microalgal production in photobioreactors}
\author{Olivier Bernard$^{1,2}$, Liu-Di Lu$^{1,3,4}$}
\date{}
\begin{document}

\maketitle
{%
\noindent{\small\textit{$^1$INRIA Sophia Antipolis M\'editerran\'ee, BIOCORE Project-Team, Universit\'e Nice C\^ote d'Azur, 2004, Route des Lucioles - BP 93, 06902 Sophia-Antipolis Cedex, France}}\\
{\small\textit{$^2$Sorbonne Universi\'e, INSU-CNRS, Laboratoire d’Oc\'eanographie de Villefranche, 181 Chemin du Lazaret, 06230 Villefranche-sur-mer, France}}\\
{\small\textit{$^3$INRIA Paris, ANGE Project-Team, 75589 Paris Cedex 12, France}}\\
{\small\textit{$^4$Sorbonne Universit\'e, CNRS, Laboratoire Jacques-Louis Lions, 75005 Paris, France}}\\%
}

\begin{abstract}
The potential of industrial applications for microalgae has motivated their recent fast development.   
Their growth dynamics depends on different factors that must be optimized. 
Since they get their energy from photosynthesis, light is a key factor that strongly influences  their productivity. 
Light is absorbed and scattered in the liquid medium, and irradiance exponentially decreases towards the darkest part of the photobioreactor at a rate nonlinearly depending on the biomass concentration. 
Maximizing productivity is then a tricky problem, especially when the growth rate is inhibited by an excess of light. Productivity optimization turns out to be highly dependent on how light is distributed along the reactor, and is therefore related to the extinction rate and the background turbidity.
The concept of optical depth productivity is introduced for systems where background turbidity must be accounted for and a global optimum maximizing productivity is proposed, extending the concept of the compensation condition.
This optimal condition consists in compensating the algal growth rate at the bottom of the reactor by the respiration.
This condition can drive the optimization of the surface biomass productivity depending on the minimum reachable depth.
We develop a nonlinear controller and prove the global asymptotic stability of the biomass concentration towards the desired optimal value.
\end{abstract}


\section{Introduction}

Microalgae are photosynthetic microorganisms whose potential has been highlighted in the last decades, especially for renewable energy and wastewater treatment~\cite{Wijffels2010,Hu2008,Scott2010}.
Compared with terrestrial plants, whose growth is slower due to CO$_2$ availability, the high actual photosynthetic yield of microalgae cultures leads to higher biomass production potential.
Some algal species can be grown to target numerous high added value commercial applications: pharmaceutical, cosmetics or food industries~\cite{Eppink2017,Spolaore2006}.
These microorganisms are generally cultivated at industrial scale in simpler and cheaper open raceway ponds or in high-tech closed photobioreactors, using solar or artificial light depending on the applications.

The biomass productivity in these reactors depends on the photosynthesis efficiency resulting from the light distribution along depth.
Light intensity is strongly attenuated in the photobioreactor due to the absorption and scattering of the microalgae and the background turbidity of the cultivation medium.
Depending on the position of the algal cells in the reactor, they perceive different light intensities which further influence the photon harvesting dynamics~\cite{Demory2018}.
Light attenuation is generally  described by a Beer-Lambert law where the light extinction rate varies with the process type and algal concentration.
Some studies have more accurately represented the way light is attenuated in the process, especially to deal with very dense multi-scattering medium where a photon can be scattered several times before being eventually absorbed~\cite{Morel1988}.
The influence of the background turbidity and the reactor depth on the average growth rate of the algae have also been considered in~\cite{Martinez201811}.

There exists models of various complexities accounting for different mechanisms modulating the growth of microalgae. 
They were used to define conditions optimizing the microalgal productivity, especially by determining an optimal algal biomass so that light is adequately distributed in the reactor~\cite{Grognard2014,Bernard202103,DelaHozSiegler2012,Cornet2009,Cornet2010}.
For instance, authors in~\cite{Masci2010} have used the Droop model~\cite{Droop1968,Droop1983} combining with the growth rate description in~\cite{Bernard2009} to give an optimal condition to maximize the surface biomass productivity.
This so-called \textit{compensation condition} consists in cancelling the net growth rate at the reactor bottom. 
It is used in practice to adapt the biomass concentration to the reactor depth. 
This condition has been numerically shown to be still valid for more complicated cases where both light and temperature vary in~\cite{Bernard202011} in a raceway pond with a varying volume.

On top of light extinction and water depth which have been already deeply studied, in this paper, we focus on understanding how the productivity in photobioreactors is also influenced by the background turbidity. 

Our first contribution, was to extend the work in~\cite{Masci2010} by choosing a more realistic description of the algal growth dealing with photoinhibition.
Our second contribution consisted in  considering a general biomass dependent light extinction function accounting for the background turbidity of the system.
The concept of optical depth productivity is introduced and a condition is derived on the optical depth for globally maximizing productivity.
This optimum corresponds to the \textit{compensation condition}.
We then use this optimal condition to characterize the optimization of the surface biomass productivity depending on the minimum achievable water depth. 
When the light extinction rate is affine with respect to the algal biomass, an upper limit to the productivity (obtained for an infinitely small depth) is given.
A nonlinear controller is given and is proved to stabilize the evolution of the biomass towards the optimal desired value.
The optimal behaviours are illustrated in different cases by numerical experiments.

This paper is organised as follows.
In Section~\ref{sec:model}, we define the key concepts such as average growth rate and light distribution.
We then study the optimization problem in Section~\ref{sec:opt}. 
More precisely we investigate the global behaviour of the optical depth productivity and the optimal condition in Subsection~\ref{subsec:globopt}.
The optimal biomass concentration for a given reactor depth to maximize the surface biomass productivity is investigated in Subsection~\ref{subsec:locopt}.
A nonlinear controller is then introduced in Section~\ref{sec:controller} to stabilize the biomass concentration towards its optimal value.
We illustrate and discuss the behaviour of the optima in different cases by some numerical experiments in Section~\ref{sec:num}.

\section{Description of the model}\label{sec:model}

For a given light intensity $I$ [$\si{\mu mol.m^{-2}.s^{-1}}$], the growth rate of microalgae is defined by a Haldane-type description parametrized as in~\cite{Bernard2012} 
\begin{equation}\label{eq:haldane}
\mu(I) :=  \mu_{\max} \frac{I}{I+\frac{\mu_{\max}}{\theta}(\frac{I}{I^*}-1)^2},
\end{equation}
where $\theta$ is the initial slope of $\mu$ [$d^{-1}$], $\mu_{\max}$ denotes the maximum value of $\mu$ and $I^*$ represents the optimal light intensity. 
This description results from a mechanical consideration of the light harvesting dynamics represented by the Han system~\cite{Han2002} (see~\ref{app:relation} for more details).
The light attenuation is described by a Beer-Lambert law
\begin{equation}\label{eq:beer}
I(X, z) := I_s\exp\Big( \varepsilon(X) z \Big),
\end{equation}
where $X$ [$\si{g.m^{-3}}$] represents the biomass concentration, $z\in[-h,0]$ denotes the vertical position of the algal cells with $h$ [$\si{m}$] the depth and $I_s$ is the light intensity at the reactor surface.
The light extinction $\varepsilon$, which summarises the light absorption and diffusion, is considered to be correlated to the biomass concentration $X$
\begin{equation}\label{eq:eps}
\varepsilon(X) := \alpha_0(s) X^s + \alpha_1,
\end{equation}
where $0<s\leq1$, $\alpha_0(s)>0$ [$\si{m^2.g^{-1}}$] stands for the specific light extinction coefficient of the microalgae species. 
It depends on the parameter $s$.  
The background turbidity, $\alpha_1$ [$\si{m^{-1}}$], is  due to all non-microalgae components i.e. suspended solids and dissolved colored material.
The dependence of $s$ in $\alpha_0$ will be omitted hereafter when no confusion may occur.

From~\eqref{eq:beer} one can compute the mean light intensity received by the algae culture $\bar I = I_s\int_{-h}^0 e^{\varepsilon(X) z} \D z = \frac{I_s}{\varepsilon(X)} \Big( 1 - e^{-\varepsilon(X) h} \Big)$.
This quantity is a decreasing function of $\varepsilon(X)$, which confirms the intuition that a higher biomass concentration or a higher background turbidity leads to lower mean light received in the reactor, due to stronger light attenuation.

Replacing $I$ by~\eqref{eq:beer} in~\eqref{eq:haldane}, one can see that the growth rate varies with depth of the reactor.
Lower growth rate in the upper part of the reactor results from the photo-inhibition caused by the high light intensity.
Similarly, growth rate is weak in the lower part of the reactor because of the low light intensity.
The mean growth rate in the reactor is defined by
\begin{equation}\label{eq:mubar}
\bar \mu(X,h) := \frac 1h \int_{-h}^0 \mu(I(X, z)) \D z.
\end{equation}
Applying then a change of the variable $y = \varepsilon(X) z$, it can be written as
\begin{equation}\label{eq:mubary}
\bar \mu(X,h) = \frac 1{\varepsilon(X)h} \int_0^{\varepsilon(X)h} \mu(I(-y)) \D y,
\end{equation}
so that the mean growth rate depends on \textit{the optical depth} $\varepsilon(X)h$.
This quantity is denoted by $Y$ [-] hereafter.
In this case, the average growth rate~\eqref{eq:mubary} can also be written as a function of $Y$ (i.e. $\bar \mu(X,h)=\bar \mu(Y)$).
Our aim is to optimize the surface biomass productivity (units: $\si{g \cdot m^{-2} \cdot d^{-1}}$) which is defined by
\begin{equation}\label{eq:Pi}
\Pi := (\bar \mu - R) X h.
\end{equation}

\begin{remark}
The evolution of the biomass concentration $X$ is given by
\begin{equation}\label{eq:dotX}
\dot X = (\bar \mu - R - D) X,
\end{equation}
where $R$ [$\si{d^{-1}}$] is the respiration rate and $D$ [$\si{d^{-1}}$] denotes the reactor dilution rate. 
Note that at equilibrium, the biomass surface productivity $\Pi$ is the product between dilution rate ($D = \bar \mu - R$) and surface biomass $X h$.

Note that a nonlinear controller for $D$ is introduced in section~\ref{sec:controller} to stabilize~\eqref{eq:dotX} to the value of $X$ optimizing productivity. 
\end{remark}

\section{Analysis of the optimal productivity}\label{sec:opt}

In this section, we investigate the optimization problem associated with the productivity $\Pi$.
Note that the biomass concentration $X$ and the depth $h$ are both defined on $\R_+$.

\subsection{{Optical depth productivity}}\label{subsec:globopt}

First of all, let us define the optical depth productivity (units: \si{d^{-1}}) by
\begin{equation}\label{eq:P}
P:=(\bar \mu - R) Y.
\end{equation}

\begin{remark}
According to the definition of the optical depth productivity~\eqref{eq:P}, a thin reactor with high biomass concentration is equivalent to a deep reactor with low biomass concentration as long as they both share the same optical depth $Y$.
A low value of $Y$ means a weaker photon harvesting since less light is absorbed.
On the reverse, a too high $Y$ means that light hardly reaches  the bottom of the reactor, with an area where respiration (loss of CO$_2$) exceeds growth (fixation of  CO$_2$). 
Hence, it is necessary to determine the value of $Y$ which maximizes the efficiency of the productivity $P$.
\end{remark}

\begin{theorem}\label{thm:Popt}
Given a surface light intensity $I_s$, there exists an optimal value $Y_{\text{opt}}$ which maximizes the optical productivity $P$.
This value satisfies $\mu \left( I(Y_{\text{opt}}) \right) = R$ and can be expressed explicitly according to the growth rate at the surface $\mu(I_s)$:
\begin{equation}\label{eq:Yopt}
Y_{\text{opt}} =
\left\{
\begin{array}{lr}
\ln \left(\frac{\frac{2I_s R\mu_{\max}}{\alpha {I^*}^2}}{\mu_{\max} - R + \frac{2R\mu_{\max}}{\alpha I^*} - \sqrt{(\mu_{\max} - R)(\mu_{\max} - R + \frac{4R\mu_{\max}}{\alpha I^*})}}\right), \quad \mu(I_s) > R,\\
\ln \left(\frac{\frac{2I_s R\mu_{\max}}{\alpha {I^*}^2}}{\mu_{\max} - R + \frac{2R\mu_{\max}}{\alpha I^*} + \sqrt{(\mu_{\max} - R)(\mu_{\max} - R + \frac{4R\mu_{\max}}{\alpha I^*})}}\right), \quad \mu(I_s) \leq R.
\end{array}
\right.
\end{equation}
\end{theorem}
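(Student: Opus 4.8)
The plan is to optimize $P(Y) = (\bar\mu(Y) - R)\,Y$ directly by differentiation. First I would compute $\frac{\D P}{\D Y}$ using the definition $\bar\mu(Y) = \frac{1}{Y}\int_0^Y \mu(I(-y))\,\D y$ from~\eqref{eq:mubary}. Writing $G(Y) := \int_0^Y \mu(I(-y))\,\D y$, we have $P(Y) = G(Y) - RY$, so $\frac{\D P}{\D Y} = \mu(I(-Y)) - R$. This is the crucial simplification: because $P$ is built from the \emph{integral} (not the average) of the growth rate, its derivative is just the integrand evaluated at the bottom of the reactor minus the respiration. Hence any interior critical point $Y_{\text{opt}}$ must satisfy $\mu(I(Y_{\text{opt}})) = R$, which is exactly the claimed \emph{compensation condition}.

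Next I would argue that such a critical point exists and is a maximum. Since $I(Y) = I_s e^{-Y}$ is strictly decreasing in $Y$, ranging over $(0, I_s]$, and $\mu$ is the Haldane function~\eqref{eq:haldane} which vanishes at $I=0$, the equation $\mu(I_s e^{-Y}) = R$ has a solution for $Y>0$ precisely when $R$ lies in the appropriate range of $\mu$ restricted to $(0,I_s)$. Here the two cases in~\eqref{eq:Yopt} appear naturally: if $\mu(I_s) > R$ (surface growth exceeds respiration), then as $Y$ increases from $0$ the light at the bottom decreases, $\mu$ at the bottom first may increase past $I^*$ then decreases; the relevant root is the one on the decreasing (low-light) branch of Haldane, giving $\frac{\D P}{\D Y} > 0$ for $Y < Y_{\text{opt}}$ and $<0$ beyond — a genuine maximum. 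If $\mu(I_s) \le R$, the bottom is always light-limited relative to compensation on the increasing branch, and one picks the other root of the quadratic. In both cases I would check the sign of $\frac{\D P}{\D Y} = \mu(I_s e^{-Y}) - R$ changes from $+$ to $-$ at $Y_{\text{opt}}$, and handle the boundary behaviour ($P(0)=0$, $P(Y)\to -\infty$ as $Y\to\infty$ since $\bar\mu$ stays bounded while $-RY\to-\infty$) to conclude the critical point is the global maximizer on $\R_+$.

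Finally I would derive the explicit formula~\eqref{eq:Yopt}. Setting $I := I_s e^{-Y}$, the compensation equation $\mu(I) = R$ with the Haldane form~\eqref{eq:haldane} becomes $\mu_{\max} I = R\big(I + \tfrac{\mu_{\max}}{\theta}(\tfrac{I}{I^*}-1)^2\big)$, which is a quadratic in $I$. Solving it yields two roots $I_\pm$; then $Y_{\text{opt}} = \ln(I_s / I_\pm) = -\ln(I_\pm/I_s)$, and rearranging (clearing the $(I/I^*-1)^2$ term, using $\alpha$ in place of $\theta$ as the paper's notation in~\eqref{eq:Yopt} suggests) gives the two displayed expressions, with the $-\sqrt{\cdot}$ root corresponding to $\mu(I_s) > R$ (the larger $I$, hence the root closer to $I^*$ from the high-light side) and the $+\sqrt{\cdot}$ root to $\mu(I_s) \le R$. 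The discriminant $(\mu_{\max}-R)(\mu_{\max}-R+\tfrac{4R\mu_{\max}}{\alpha I^*})$ is nonnegative exactly when $\mu_{\max} \ge R$, which is the physically meaningful regime, so the roots are real.

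The main obstacle I anticipate is not the differentiation — that step is clean — but rather the careful case analysis justifying \emph{which} quadratic root is the maximizer in each regime, and verifying that $\frac{\D P}{\D Y}$ genuinely changes sign there rather than merely touching zero. One must use the geometry of the Haldane curve (it increases on $(0,I^*)$, decreases on $(I^*,\infty)$) together with whether $I_s$ sits above or below $I^*$ and above or below the compensation level, to rule out spurious critical points and to pin down the correct branch. Matching this to the exact algebraic form in~\eqref{eq:Yopt}, including the precise placement of the $\pm$ sign, is the delicate bookkeeping step.
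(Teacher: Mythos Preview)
Your proposal is correct and follows essentially the same approach as the paper: both rest on the identity $P(Y)=\int_0^Y\bigl(\mu(I(-y))-R\bigr)\,\D y$, from which the compensation condition $\mu(I(-Y_{\text{opt}}))=R$ follows immediately---you phrase it via $P'(Y)=\mu(I(-Y))-R$, while the paper writes the equivalent integral remainder $P(Y)-P(Y_{\text{opt}})=\int_{Y_{\text{opt}}}^{Y}(\mu(I(-y))-R)\,\D y$ and argues its sign. The subsequent quadratic solution of $\mu(I)=R$ and the case split on $\mu(I_s)\gtrless R$ to select the appropriate root are likewise parallel to the paper's argument.
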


\begin{proof}
For a given $Y$, the optical productivity $P$ can be written from~\eqref{eq:mubary} and~\eqref{eq:P}
\begin{align}
P(Y) = &\int_0^{Y_{\text{opt}}} \mu(I(-y)) - R \D y + \int_{Y_{\text{opt}}}^Y \mu(I(-y)) - R \D y \nonumber\\
= &P(Y_{\text{opt}}) + \int_{Y_{\text{opt}}}^Y \mu(I(-y)) - R \D y, \label{eq:second_line}
\end{align}
where $Y_{\text{opt}}$ is chosen according to~\eqref{eq:Yopt}. 
On the other hand, the function 
\begin{equation*}
\mu(I(-y)) = \frac{\mu_{\max}I(-y)}{I(-y)+\frac{\mu_{\max}}{\theta}(\frac{I(-y)}{I^*}-1)^2},
\end{equation*}
is concave with respect to $y>0$, so that there exists a $y^*$ such that $\mu(I(-y^*)) = \mu_{\max}$.
According to the value of $\mu(I(0))$ (i.e. $\mu(I_s)$), two cases must be considered:
\begin{itemize}
\item if $\mu(I(0)) = \mu(I_s) >R$, the second term of~\eqref{eq:second_line} is always negative. 
Indeed, in the case where $Y$ is smaller than $Y_{\text{opt}}$, using the concavity of $\mu(I(-y))$, one finds $\mu\left( I(-y) \right) > R$, $\forall y < Y_{\text{opt}}$.
In other words, the second term of~\eqref{eq:second_line} removes the microalgae which grow more than they respire.
Otherwise, one finds  $\mu\left( I(-y) \right) < R$, $\forall y > Y_{\text{opt}}$ (for the same reason as above).
This means that the second term of~\eqref{eq:second_line} adds the microalgae which respire more than their growth.
\item if $\mu(I(0)) = \mu(I_s) \leq R$, then there exists a value $\tilde y \in [0,y^*)$ such that $\mu(I(-\tilde y)) = R$. 
Then if $Y$ is greater than $\tilde y$, the second term of~\eqref{eq:second_line} is negative for the same reason as above.
Otherwise, the productivity $P(Y)$ is negative.
\end{itemize}
In both cases, the second term of~\eqref{eq:second_line} is negative. 
Thus $Y_{\text{opt}}$ maximizes the quantity $P$.

In order to compute $Y_{\text{opt}}$, one needs to solve $\mu ( I ) = R$, or equivalently: 
\begin{equation*}
\frac{R\mu_{\max}}{\theta {I^*}^2}I^2 +(R - \mu_{\max} - \frac{2R\mu_{\max}}{\theta I^*})I + \frac{R\mu_{\max}}{\theta}  = 0.
\end{equation*}
The discriminant of this second order polynomial equation is given by $\Delta = (\mu_{\max} - R)(\mu_{\max} - R + \frac{4R\mu_{\max}}{\theta I^*}) > 0$, which implies that there exists two real roots.
The sum and the product of two roots are both positive, hence both of these two roots are also positive.
Finally $Y_{\text{opt}}$ can be determined by the growth rate at the surface $\mu(I_s)$:
\begin{itemize}
\item if $\mu(I_s) > R$, then there exists one root in the interval $(0, I_s)$ and one root in the interval $(I_s,+\infty)$.
In this case, one has
\begin{equation*}
Y_{\text{opt}} = \ln \left(\frac{\frac{2I_s R\mu_{\max}}{\theta {I^*}^2}}{\mu_{\max} - R + \frac{2R\mu_{\max}}{\theta I^*} - \sqrt{(\mu_{\max} - R)(\mu_{\max} - R + \frac{4R\mu_{\max}}{\theta I^*})}}\right).
\end{equation*}
\item if $\mu(I_s) \leq R$, then two roots both lies into the interval $(0, I_s]$.
In this case, we choose the smaller one (since it represents the light at lower part of the reactors)
\begin{equation*}
Y_{\text{opt}} = \ln \left(\frac{\frac{2I_s R\mu_{\max}}{\theta {I^*}^2}}{\mu_{\max} - R + \frac{2R\mu_{\max}}{\theta I^*} + \sqrt{(\mu_{\max} - R)(\mu_{\max} - R + \frac{4R\mu_{\max}}{\theta I^*})}}\right).
\end{equation*}
\end{itemize}
This concludes the proof.
\end{proof}

\begin{remark}\label{rem:Yopt}
As shown in~\eqref{eq:Yopt}, the value of $Y_{\text{opt}}$ only depends on the model parameters ($\theta$, $\mu_{\max}$, $I^*$, $R$) and on the light intensity at the reactor surface $I_s$. 
In other words, the cancellation of the net growth rate at the bottom of the reactor is the optimal strategy to maximize optical depth productivity (see in Figure~\ref{fig:Pmuy} for illustrations).
\end{remark}

\subsection{Surface biomass productivity}\label{subsec:locopt}

In this section, we focus on the surface biomass productivity $\Pi$.
From the definition of $\Pi$~\eqref{eq:Pi} and the definition of $P$~\eqref{eq:P}, one has
\begin{equation}\label{eq:PtoPi}
\Pi = \frac{X}{\varepsilon(X)}P.
\end{equation}
In general, it is not possible to apply the same strategy (as in the proof of Theorem~\ref{thm:Popt}) to optimize $\Pi$, since $P$ and $\Pi$ do generally not have the same behaviour. 
Only in the case where $s=1$ and $\alpha_1=0$, the factor $\frac{X}{\varepsilon(X)}$ simplifies, leading to the same optimum.
Using then Theorem~\ref{thm:Popt}, we deduce directly the following results.

\begin{corollary}\label{cor:yoptnotur}
If the light extinction function defined by~\eqref{eq:eps} satisfies $\alpha_1 = 0$ and $s=1$, then $Y_{\text{opt}}$ defined by~\eqref{eq:Yopt} maximizes the productivity $\Pi$ and $Y_{\text{opt}}$ is the global optimum.
Moreover, $\tilde Y_{\text{opt}} := Y_{\text{opt}} / \alpha_0$ is the optimal surface biomass.
\end{corollary}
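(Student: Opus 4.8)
The plan is to specialize Theorem~\ref{thm:Popt} via the identity~\eqref{eq:PtoPi}, which reads $\Pi = \frac{X}{\varepsilon(X)} P$. The key observation is that when $\alpha_1 = 0$ and $s = 1$, the light extinction function~\eqref{eq:eps} becomes simply $\varepsilon(X) = \alpha_0 X$, so the prefactor collapses:
\begin{equation*}
\frac{X}{\varepsilon(X)} = \frac{X}{\alpha_0 X} = \frac{1}{\alpha_0},
\end{equation*}
a strictly positive constant independent of $X$. Hence $\Pi = \frac{1}{\alpha_0} P$, and maximizing $\Pi$ over the admissible variables is exactly equivalent to maximizing $P$.

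**Next I would** translate the variables carefully. In this degenerate case the optical depth is $Y = \varepsilon(X) h = \alpha_0 X h$, and Theorem~\ref{thm:Popt} tells us that $P$, viewed as a function of $Y$ alone (recall $\bar\mu = \bar\mu(Y)$ and $P(Y) = (\bar\mu(Y) - R)Y$), attains its global maximum at $Y_{\text{opt}}$ given by~\eqref{eq:Yopt}. Since $\Pi = P/\alpha_0$ is just a positive rescaling, $\Pi$ as a function of $Y$ is also globally maximized at the same $Y_{\text{opt}}$, and this is a genuine global optimum because Theorem~\ref{thm:Popt} established $P(Y) \le P(Y_{\text{opt}})$ for all admissible $Y$, with the inequality inherited directly by $\Pi$. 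This gives the first assertion of the corollary.

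**For the final claim** about the optimal surface biomass, I would interpret ``surface biomass'' as the quantity $Xh$ (consistent with the units $\si{g\cdot m^{-2}}$ appearing in the definition of $\Pi$ in~\eqref{eq:Pi}). Since $Y = \alpha_0 (Xh)$ in this case, the optimal surface biomass is obtained by inverting this relation at $Y = Y_{\text{opt}}$, namely $Xh = Y_{\text{opt}}/\alpha_0 =: \tilde Y_{\text{opt}}$. One should note that because $\Pi$ depends on $(X,h)$ only through the product $Y = \alpha_0 Xh$, the optimum is not a single point in the $(X,h)$-plane but the whole hyperbola $Xh = \tilde Y_{\text{opt}}$; this is the precise sense in which ``a thin reactor with high biomass concentration is equivalent to a deep reactor with low biomass concentration'' (cf.\ the remark following~\eqref{eq:P}).

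**The only real subtlety** — and the step I would be most careful about — is making sure nothing degenerate happens at the boundary of the domain: as $X \to 0$ or $h \to 0$ one has $Y \to 0$ and $\Pi \to 0$, and as $Y \to \infty$ the integrand $\mu(I(-y)) - R$ eventually becomes negative (light no longer reaches the bottom), so $P(Y)$ and hence $\Pi$ decrease; thus the supremum is attained at the interior point $Y_{\text{opt}}$ rather than escaping to the boundary. But this is exactly what the proof of Theorem~\ref{thm:Popt} already verified for $P$, so no new argument is needed — the corollary is essentially a one-line consequence of the theorem once the cancellation $X/\varepsilon(X) = 1/\alpha_0$ is noted.
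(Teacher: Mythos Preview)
Your proposal is correct and follows essentially the same approach as the paper: both arguments hinge on the observation that when $\alpha_1=0$ and $s=1$ the prefactor $X/\varepsilon(X)$ collapses to the constant $1/\alpha_0$, so that $\Pi = P/\alpha_0$ and Theorem~\ref{thm:Popt} applies directly. Your additional remarks on boundary behaviour and the hyperbola of optima are sound elaborations, but the core one-line reduction is identical to the paper's.
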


\begin{proof}
Since $\alpha_1 = 0$ and $s=1$, $Y = \varepsilon(X) h = \alpha_0 \tilde Y$ with $\tilde Y := X h$ the surface biomass.
Meanwhile, using~\eqref{eq:PtoPi}, one has $P(\cdot) = \alpha_0\Pi(\cdot)$, then following the same analysis, one finds that $Y_{\text{opt}}$ maximizes $P(\cdot)$, therefore the productivity $\Pi(\cdot)$.
Finally, $\tilde Y_{\text{opt}}$ is given by $Y_{\text{opt}} / \alpha_0$.
\end{proof}

\begin{corollary}\label{cor:hopt}
If the objective is to reach a biomass concentration $X_1$, there exists a unique reactor depth $h_1$ which satisfies $\varepsilon(X_1)h_1 = Y_{\text{opt}}$ and maximizes the productivity $\Pi(X_1,\cdot)$ for this biomass target.
\end{corollary}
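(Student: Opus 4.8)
The plan is to reduce the statement to Theorem~\ref{thm:Popt} through the change of variable already recorded in~\eqref{eq:mubary} and~\eqref{eq:PtoPi}. Fix the target biomass concentration $X_1>0$. Since $0<s\le 1$, $\alpha_0>0$ and $\alpha_1\ge 0$, the extinction $\varepsilon(X_1)=\alpha_0 X_1^{s}+\alpha_1$ is a strictly positive constant that does not depend on the depth. Hence the map $h\mapsto Y:=\varepsilon(X_1)h$ is a strictly increasing bijection from $\R_+$ onto $\R_+$: to every admissible depth corresponds exactly one optical depth, and conversely.

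First I would rewrite the surface productivity, for this fixed biomass, as a function of the optical depth only. Recalling from~\eqref{eq:mubary} that $\bar\mu$ depends on $(X,h)$ solely through $Y=\varepsilon(X)h$, and combining~\eqref{eq:Pi}, \eqref{eq:P} and~\eqref{eq:PtoPi}, one obtains
\begin{equation*}
\Pi(X_1,h)=\frac{X_1}{\varepsilon(X_1)}\,P\bigl(\varepsilon(X_1)h\bigr),
\end{equation*}
where the prefactor $X_1/\varepsilon(X_1)$ is a positive constant independent of $h$. Therefore, along the slice $X\equiv X_1$, maximising $\Pi(X_1,\cdot)$ over $h\in\R_+$ is equivalent, via the bijection $h\mapsto Y$, to maximising $P$ over $Y\in\R_+$.

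Then I would invoke Theorem~\ref{thm:Popt}: the optical depth productivity $P$ attains its maximum at $Y_{\text{opt}}$ given by~\eqref{eq:Yopt}. Pulling this back through $Y=\varepsilon(X_1)h$ shows that $\Pi(X_1,\cdot)$ is maximised precisely at the depth $h_1$ solving $\varepsilon(X_1)h_1=Y_{\text{opt}}$, i.e. $h_1=Y_{\text{opt}}/\varepsilon(X_1)$; injectivity of $h\mapsto\varepsilon(X_1)h$ makes this $h_1$ the unique such depth. This also makes transparent the practical reading of the result: once the biomass level $X_1$ has been chosen, the compensation condition $\mu(I(Y_{\text{opt}}))=R$ selects the reactor depth.

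I do not expect a genuine obstacle: the statement is essentially a corollary of Theorem~\ref{thm:Popt} once the proportionality $\Pi(X_1,\cdot)\propto P$ is observed. The only point needing a little care is the word ``unique''. If one wants $h_1$ to be the unique global maximiser of $\Pi(X_1,\cdot)$ (rather than merely a maximiser satisfying the compensation identity), one should note that the second integral in~\eqref{eq:second_line} is \emph{strictly} negative for every $Y\neq Y_{\text{opt}}$, which follows from the strict concavity of $y\mapsto\mu(I(-y))$ exploited in the proof of Theorem~\ref{thm:Popt}; strict monotonicity of $h\mapsto\varepsilon(X_1)h$ then transfers this strictness to $\Pi(X_1,\cdot)$.
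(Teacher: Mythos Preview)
Your proof is correct and follows exactly the paper's approach: fix $X_1$, use \eqref{eq:PtoPi} to see that $\Pi(X_1,\cdot)$ is a positive constant times $P(\varepsilon(X_1)\,\cdot)$, and invoke Theorem~\ref{thm:Popt} to read off $h_1=Y_{\text{opt}}/\varepsilon(X_1)$. You simply spell out in more detail what the paper's two-line proof leaves implicit, including the bijection $h\mapsto Y$ and the uniqueness discussion.
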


\begin{proof}
Since $X_1$ is fixed, then using~\eqref{eq:PtoPi}, one has directly that the optimum is given by $Y_{\text{opt}}$.
In this case, $h_1$ is defined by $Y_{\text{opt}}/\varepsilon(X_1)$.
\end{proof}

In Corollary~\ref{cor:hopt}, we have studied the case with a fixed biomass concentration $X$.
This result does not depend on the considered law $\varepsilon(X)$.
However, optimizing $X$ is more tricky, Corollary~\ref{cor:yoptnotur} provides a result in the case with a specific value of $\alpha_1$ and $s$.
In more general case, the strategy used in the proof of Theorem~\ref{thm:Popt} may fail when optimizing $X$.
We first consider the general case where the background turbidity $\alpha_1$ is not zero.
We split the study into two parts, depending on the value of $s$.

\subsubsection{The standard case (\texorpdfstring{$s = 1$}{Lg})}

Let us start with a technical lemma.

\begin{lemma}\label{lem:dirX}
Let $h_1$ a given depth and $(X_1, h_1)$ which satisfies $Y_{\text{opt}}=\varepsilon(X_1)h_1$. 
Then one has: 
\begin{equation*}
\forall X \in (X_1,\frac{\alpha_1}{\alpha_0 R Y_{\text{opt}}}P(Y_{\text{opt}}) ),\quad  \Pi(X, h_1) > \Pi(X_1, h_1).
\end{equation*}
\end{lemma}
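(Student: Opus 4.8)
The plan is to fix the depth at $h=h_1$ and compare $\Pi(X,h_1)$ with $\Pi(X_1,h_1)$ directly, using the identity $\Pi=\frac{X}{\varepsilon(X)}P$ from~\eqref{eq:PtoPi} and the fact that, at fixed $h_1$, the optical depth $Y=\varepsilon(X)h_1=(\alpha_0X+\alpha_1)h_1$ is a strictly increasing affine function of $X$ (we are in the case $s=1$); in particular $X>X_1$ is equivalent to $Y>Y_{\text{opt}}=\varepsilon(X_1)h_1$. Writing $Y=\varepsilon(X)h_1$, I would first perform the splitting
\begin{equation*}
\Pi(X,h_1)-\Pi(X_1,h_1)=\frac{X}{\varepsilon(X)}\bigl(P(Y)-P(Y_{\text{opt}})\bigr)+\left(\frac{X}{\varepsilon(X)}-\frac{X_1}{\varepsilon(X_1)}\right)P(Y_{\text{opt}}).
\end{equation*}
If the asserted interval is empty there is nothing to prove, so we may assume $P(Y_{\text{opt}})>0$ (equivalently, we are in the regime $\mu(I_s)>R$ of Theorem~\ref{thm:Popt}, where this sign was already established). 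Then the second term is nonnegative, since $X\mapsto X/\varepsilon(X)$ is increasing: its derivative equals $\dfrac{\alpha_1}{\varepsilon(X)^2}>0$ because $\alpha_1>0$. The first term is the one to control.

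For it I would use the crude but sharp-enough bound coming from $\mu\ge0$: since $P'(s)=\mu(I(-s))-R\ge -R$, integrating from $Y_{\text{opt}}$ to $Y$ gives $P(Y)-P(Y_{\text{opt}})\ge -R\,(Y-Y_{\text{opt}})$. Inserting this, together with the two elementary identities valid for $s=1$,
\begin{equation*}
\frac{X}{\varepsilon(X)}-\frac{X_1}{\varepsilon(X_1)}=\frac{\alpha_1(X-X_1)}{\varepsilon(X)\varepsilon(X_1)},\qquad Y-Y_{\text{opt}}=\alpha_0h_1\,(X-X_1),
\end{equation*}
yields
\begin{equation*}
\Pi(X,h_1)-\Pi(X_1,h_1)\ \ge\ \frac{X-X_1}{\varepsilon(X)}\left(\frac{\alpha_1 P(Y_{\text{opt}})}{\varepsilon(X_1)}-\alpha_0Rh_1X\right).
\end{equation*}
Finally, substituting $\varepsilon(X_1)=Y_{\text{opt}}/h_1$ (which is exactly the hypothesis $\varepsilon(X_1)h_1=Y_{\text{opt}}$) turns the parenthesis into $\alpha_0Rh_1\bigl(\tfrac{\alpha_1}{\alpha_0RY_{\text{opt}}}P(Y_{\text{opt}})-X\bigr)$, which is strictly positive precisely when $X<\tfrac{\alpha_1}{\alpha_0RY_{\text{opt}}}P(Y_{\text{opt}})$. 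Combined with $X>X_1$ this gives $\Pi(X,h_1)>\Pi(X_1,h_1)$ on the stated range.

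I expect the only genuine subtlety to be the choice of estimate for $P(Y)-P(Y_{\text{opt}})$. One is tempted to invoke the concavity/monotonicity structure of $P$ and analyse $X\mapsto\Pi(X,h_1)$ through its derivative, but that route is harder and does not reproduce the clean threshold in the statement. Replacing the net growth rate below the compensation depth by its worst value $-R$ is exactly the linearisation that makes every term telescope and recovers the endpoint $\tfrac{\alpha_1}{\alpha_0RY_{\text{opt}}}P(Y_{\text{opt}})$; checking the two affine identities, the sign $P(Y_{\text{opt}})>0$, and the monotonicity of $X/\varepsilon(X)$ is then routine.
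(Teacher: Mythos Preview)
Your proof is correct and follows essentially the same route as the paper: the same splitting of $\Pi(X,h_1)-\Pi(X_1,h_1)$ via~\eqref{eq:PtoPi}, the same crude bound $P(Y)-P(Y_{\text{opt}})\ge -R(Y-Y_{\text{opt}})$ from $\mu\ge0$, and the same two affine identities for $s=1$. The only cosmetic difference is the order of the final algebra---the paper replaces $X$ by the upper endpoint first and then cancels, whereas you first factor to $\alpha_0Rh_1\bigl(\tfrac{\alpha_1}{\alpha_0RY_{\text{opt}}}P(Y_{\text{opt}})-X\bigr)$ and read off the sign; your explicit remark that a possibly empty interval (i.e.\ $P(Y_{\text{opt}})\le0$) is vacuous is a small clarification the paper leaves implicit.
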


\begin{proof}
Let $X \in (X_1,\frac{\alpha_1}{\alpha_0 R Y_{\text{opt}}}P(Y_{\text{opt}}))$ and denote $Y = \varepsilon(X)h_1$.
Note that $Y > Y_{\text{opt}}$.
On the other hand, one has
\begin{equation*}
\begin{split}
&\Pi(X, h_1)  - \Pi(X_1, h_1) \\
= &\frac{X}{\varepsilon(X)} P(Y) - \frac{X_1}{\varepsilon(X_1)} P(Y_{\text{opt}}) \\
= &\frac{X}{\varepsilon(X)} \Big(P(Y) - P(Y_{\text{opt}})\Big) + \Big(\frac{X}{\varepsilon(X)} - \frac{X_1}{\varepsilon(X_1)}\Big) P(Y_{\text{opt}}) \\
= & \frac{X}{\varepsilon(X)} \int_{Y_{\text{opt}}}^Y \mu(I(-y)) -R\D y + \frac{\alpha_1(X-X_1)}{\varepsilon(X)\varepsilon(X_1)}P(Y_{\text{opt}}) \\
> & -\frac{X}{\varepsilon(X)}R(Y - Y_{\text{opt}}) + \frac{\alpha_1(X-X_1)}{\varepsilon(X)\varepsilon(X_1)}P(Y_{\text{opt}}) \\
= & -\frac{X}{\varepsilon(X)}R\alpha_0(X - X_1) h_1 + \frac{\alpha_1(X-X_1)}{\varepsilon(X)\varepsilon(X_1)}P(Y_{\text{opt}}) \\
> & -\frac{\alpha_1P(Y_{\text{opt}}) R\alpha_0}{\varepsilon(X)\alpha_0 R Y_{\text{opt}}}(X - X_1) h_1 + \frac{\alpha_1(X-X_1)}{\varepsilon(X)\varepsilon(X_1)}P(Y_{\text{opt}}) \\
= & -\frac{\alpha_1(X - X_1)}{\varepsilon(X)\varepsilon(X_1)}P(Y_{\text{opt}}) + \frac{\alpha_1(X-X_1)}{\varepsilon(X)\varepsilon(X_1)}P(Y_{\text{opt}}) \\
= & 0.
\end{split}
\end{equation*}
where we use the fact that $\mu(0) = 0$ (see~\eqref{eq:haldane}), thus $\mu(I(-y))>0$, $\forall y\in[Y_{\text{opt}},Y]$.
Indeed, since $Y =\varepsilon(X)h_1$ and $Y>Y_{\text{opt}}$, using~\eqref{eq:beer} one has $0 < I_s\exp(-Y) < I_s\exp(-Y_{\text{opt}}).$
Finally, since $\mu$ is concave with respect to $I>0$, one finds $\mu(I(-Y)) = \mu(I_s\exp(-Y)) >0$.
This concludes the proof.
\end{proof}

According to Corollary~\ref{cor:hopt}, the couple $(X_1 , h_1)$ satisfies $\varepsilon(X_1)h_1=Y_{\text{opt}}$ and corresponds to the optimum of $\Pi(X_1,\cdot)$ for a given $X_1$.
However the previous lemma implies that this is not the optimal condition to optimize $\Pi(\cdot,h_1)$ for a given $h_1$.
This lemma then enables us to prove the next theorem.

\begin{theorem}\label{thm:globopt}
If $\alpha_1>0$, there is no global optimum for the productivity $\Pi(\cdot,\cdot)$ in $\R_+ \times \R_+$.
\end{theorem}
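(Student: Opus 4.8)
The plan is to argue by contradiction, exhibiting the supremum of $\Pi$ as a limit that is approached but never realized. Suppose $(X^\star,h^\star)\in\R_+\times\R_+$ were a global maximizer of $\Pi$. The structural starting point is the factorization $\Pi=\tfrac{X}{\varepsilon(X)}P(Y)$ from~\eqref{eq:PtoPi}, with $Y=\varepsilon(X)h$ and, since $s=1$, $\varepsilon(X)=\alpha_0 X+\alpha_1$. First I would record two elementary ingredients: by Theorem~\ref{thm:Popt}, $P(Y)\le P(Y_{\text{opt}})$ for every admissible $Y$; and since $\alpha_1>0$, the map $X\mapsto\tfrac{X}{\varepsilon(X)}=\bigl(\alpha_0+\alpha_1/X\bigr)^{-1}$ is strictly increasing on $\R_+$ with $\sup_{X>0}\tfrac{X}{\varepsilon(X)}=\tfrac1{\alpha_0}$, the supremum not being attained.

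Next I would establish $P(Y_{\text{opt}})>0$ in the productive regime $\mu(I_s)>R$: from $P(Y)=\int_0^Y\bigl(\mu(I(-y))-R\bigr)\,\D y$, the concavity of $y\mapsto\mu(I(-y))$ (used in the proof of Theorem~\ref{thm:Popt}) together with $\mu(I(-Y_{\text{opt}}))=R$ forces $\mu(I(-y))-R>0$ on $(0,Y_{\text{opt}})$, hence $P(Y_{\text{opt}})>P(0)=0$. Combining this with the two ingredients above, every $(X,h)$ satisfies
\[
\Pi(X,h)=\frac{X}{\varepsilon(X)}P(Y)\;\le\;\frac{X}{\varepsilon(X)}P(Y_{\text{opt}})\;<\;\frac{1}{\alpha_0}P(Y_{\text{opt}}),
\]
so $\Pi$ is strictly bounded above by $\tfrac1{\alpha_0}P(Y_{\text{opt}})$. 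On the other hand, along the compensation curve $h=Y_{\text{opt}}/\varepsilon(X)$ singled out by Corollary~\ref{cor:hopt} one gets $\Pi=\tfrac{X}{\varepsilon(X)}P(Y_{\text{opt}})\to\tfrac1{\alpha_0}P(Y_{\text{opt}})$ as $X\to\infty$; therefore $\sup\Pi=\tfrac1{\alpha_0}P(Y_{\text{opt}})$ and it is not attained, contradicting the existence of $(X^\star,h^\star)$. The same conclusion can be packaged with Lemma~\ref{lem:dirX}: Corollary~\ref{cor:hopt} (optimality in $h$ at the fixed level $X^\star$) forces $\varepsilon(X^\star)h^\star=Y_{\text{opt}}$, and Lemma~\ref{lem:dirX} then produces points $(X,h^\star)$ with $X>X^\star$ and $\Pi(X,h^\star)>\Pi(X^\star,h^\star)$, which again rules $(X^\star,h^\star)$ out.

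Finally I would dispose of the non-productive regime $\mu(I_s)\le R$: here the integrand $\mu(I(-y))-R$ is $\le 0$ near $y=0$, so $P(Y_{\text{opt}})=\max_Y P(Y)\le 0$ by Theorem~\ref{thm:Popt}, whence $\Pi(X,h)=\tfrac{X}{\varepsilon(X)}P(Y)\le 0$ for all $(X,h)$ while $\Pi\to 0$ as $X\to 0^+$; so $\sup\Pi=0$ is again not attained on $\R_+\times\R_+$. I expect the sign analysis of $P(Y_{\text{opt}})$ to be the only delicate step — in particular making the dichotomy $\mu(I_s)\gtrless R$ (and the borderline $\mu(I_s)=R$) rigorous and confirming that the relevant domain is the open quadrant so that the limiting value is genuinely never reached; everything else reduces to the monotonicity of $X\mapsto X/\varepsilon(X)$ and the already-proved maximality of $Y_{\text{opt}}$ for $P$.
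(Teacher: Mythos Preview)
Your second packaging---assume a global optimum $(X^\star,h^\star)$, pin $\varepsilon(X^\star)h^\star=Y_{\text{opt}}$ via Corollary~\ref{cor:hopt}, then strictly improve in the $X$-direction via Lemma~\ref{lem:dirX}---is exactly the paper's proof. Your primary route is different and strictly more informative: combining $P(Y)\le P(Y_{\text{opt}})$ with the strict inequality $X/\varepsilon(X)<1/\alpha_0$ (which is precisely where $\alpha_1>0$ is used) gives $\Pi<P(Y_{\text{opt}})/\alpha_0$ at every point, while the compensation curve $h=Y_{\text{opt}}/\varepsilon(X)$ realizes this bound in the limit $X\to\infty$. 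You thereby identify $\sup\Pi=P(Y_{\text{opt}})/\alpha_0$ in one stroke, a conclusion the paper reaches only later (Theorem~\ref{thm:lim}) through the alternating sequence~\eqref{eq:optseq}. The paper's local argument is shorter on the page but yields less; your supremum argument subsumes both Theorem~\ref{thm:globopt} and Theorem~\ref{thm:lim}.

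One genuine slip in your non-productive branch: the integrand $\mu(I(-y))-R$ being $\le 0$ near $y=0$ does \emph{not} force $P(Y_{\text{opt}})\le 0$. When $\mu(I_s)\le R$ (with $I_s>I^*$, the photoinhibited situation), the concave profile $y\mapsto\mu(I(-y))$ crosses $R$ at some $\tilde y\in[0,Y_{\text{opt}})$ and again at $Y_{\text{opt}}$; hence $P(Y_{\text{opt}})$ is the sum of a negative lobe on $[0,\tilde y]$ and a positive lobe on $[\tilde y,Y_{\text{opt}}]$ and can perfectly well be positive (for instance by continuity from the borderline $\mu(I_s)=R$, where $\tilde y=0$ and $P(Y_{\text{opt}})>0$ strictly). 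Whenever $P(Y_{\text{opt}})>0$ your supremum argument applies verbatim, so the dichotomy you actually need is on the sign of $P(Y_{\text{opt}})$, not on $\mu(I_s)\gtrless R$. The paper, for its part, tacitly sits in the regime $P(Y_{\text{opt}})>0$ (note that Lemma~\ref{lem:dirX} requires it for the interval there to be nonempty, and the standing hypothesis $\mu(I_s)>R$ is made explicit later as Assumption~(H1)).
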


\begin{proof}
Let us assume that there exists a global optimum for the productivity $\Pi$ denoted by $(X^*,h^*)$.
Since $(X^*,h^*)$ is a global optimum, in particular, this is an optimum in the direction of $h$.
Using Corollary~\ref{cor:hopt}, we find $\varepsilon(X^*)h^* = Y_{\text{opt}}$.
However, using Lemma~\ref{lem:dirX}, there exists $\tilde X^*>X^*$ such that $\Pi(\tilde X^*,h^*)>\Pi(X^*,h^*)$.
This contradicts the fact that $(X^*,h^*)$ is a global optimum.
Therefore, the productivity $\Pi(\cdot,\cdot)$ has no global optimum.
\end{proof}

Since no global optimum for the productivity $\Pi$ can be found when $\alpha_1>0$, then we would like to study the asymptotic behaviour of $\Pi$.
In the following, we focus on the optimum in the direction of $X$ and in the direction of $h$ separately.
Given $X_0$, consider the sequence $(X_n ,h_n)_{n\in\N}$ defined by
\begin{equation}\label{eq:optseq}
h_n = \frac{Y_{\text{opt}}}{\varepsilon(X_{n-1})}, \quad X_n :={\rm argmax}_{X\in\R_+} \Pi(X,h_n).
\end{equation}
It follows from this definition that the sequence $(X_{n-1} ,h_n)_{n>0}$ corresponds to the optimum in the direction of $h$ for $X_{n-1}$, whereas the sequence $(X_n ,h_n)_{n>0}$ corresponds to the optimum in the direction of $X$ for $h_n$.
In other words, these two sequences defined by~\eqref{eq:optseq} aim at searching the local optima by optimizing in the direction of $h$ and in the direction of $X$ alternately. 
A first property of the sequence $(X_n ,h_n)_{n>0}$ is given in next lemma.

\begin{lemma}\label{lem:limprod}
The sequence $(X_n, h_n)$ verifies $\lim_{n\rightarrow \infty} \varepsilon(X_n)h_n = Y_{\text{opt}}$, and the growth rate at the reactor bottom satisfies $\lim_{n\rightarrow \infty}  \mu(I(X_n,h_n)) = R$.
\end{lemma}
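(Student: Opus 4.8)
The plan is to analyze the two interleaved sequences $(X_{n-1},h_n)$ and $(X_n,h_n)$ and extract a monotone, bounded quantity. First I would establish that the productivity values along the sequence are monotone nondecreasing. By construction $h_n = Y_{\text{opt}}/\varepsilon(X_{n-1})$, so by Corollary~\ref{cor:hopt} the pair $(X_{n-1},h_n)$ is the $h$-optimum for the fixed biomass $X_{n-1}$, giving $\Pi(X_{n-1},h_n)\ge \Pi(X_{n-1},h_{n-1})$. Then $X_n = \mathrm{argmax}_X \Pi(X,h_n)$ gives $\Pi(X_n,h_n)\ge \Pi(X_{n-1},h_n)$. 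Chaining these, $\Pi(X_n,h_n)$ is nondecreasing in $n$. Next I would argue the sequence is bounded above: for any fixed $h$, the optical depth productivity $P=(\bar\mu-R)Y$ satisfies $P\le P(Y_{\text{opt}})$ by Theorem~\ref{thm:Popt}, and in the case $s=1$ one has $\Pi = \frac{X}{\varepsilon(X)}P = \frac{X}{\alpha_0 X+\alpha_1}P \le \frac{1}{\alpha_0}P(Y_{\text{opt}})$; hence $\Pi$ is globally bounded. So $\Pi(X_n,h_n)$ converges to some finite limit $\Pi_\infty$.

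The next step is to turn convergence of the $\Pi$-values into convergence of the optical depth $Y_n := \varepsilon(X_n)h_n$. Write $\Pi(X_n,h_n) = \frac{X_n}{\varepsilon(X_n)}P(Y_n)$ and $\Pi(X_{n-1},h_n) = \frac{X_{n-1}}{\varepsilon(X_{n-1})}P(Y_{\text{opt}})$, where the latter uses $\varepsilon(X_{n-1})h_n = Y_{\text{opt}}$. Since consecutive $\Pi$-values squeeze together, $\Pi(X_n,h_n)-\Pi(X_{n-1},h_n)\to 0$. I would like to deduce from this that $P(Y_n)\to P(Y_{\text{opt}})$, which, because $P$ has a strict maximum at $Y_{\text{opt}}$ (strict because $\mu(I(-y))-R$ changes sign strictly there, the integrand being continuous and not identically $R$), forces $Y_n\to Y_{\text{opt}}$. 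To make this rigorous I would combine the two pieces: $\frac{X_n}{\varepsilon(X_n)}$ is bounded (between $0$ and $1/\alpha_0$), and the ratio $\frac{X_{n-1}/\varepsilon(X_{n-1})}{X_n/\varepsilon(X_n)}$ can be controlled because $X_n\ge X_{n-1}$ is not guaranteed a priori, so one may need the monotonicity bound $\Pi(X_n,h_n)\ge\Pi(X_{n-1},h_n)$ together with $P(Y_n)\le P(Y_{\text{opt}})$ to sandwich $\frac{X_n}{\varepsilon(X_n)}P(Y_n)$ between $\frac{X_{n-1}}{\varepsilon(X_{n-1})}P(Y_{\text{opt}})$ and the limit, forcing both $\frac{X_n}{\varepsilon(X_n)}$ to converge and $P(Y_n)\to P(Y_{\text{opt}})$. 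Once $Y_n\to Y_{\text{opt}}$, the claim on the bottom growth rate follows immediately: $I(X_n,h_n) = I_s e^{-Y_n}\to I_s e^{-Y_{\text{opt}}} = I(Y_{\text{opt}})$ by continuity of~\eqref{eq:beer}, and then $\mu(I(X_n,h_n))\to\mu(I(Y_{\text{opt}})) = R$ by continuity of $\mu$ and the defining property $\mu(I(Y_{\text{opt}}))=R$ from Theorem~\ref{thm:Popt}.

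The main obstacle I anticipate is the middle step: extracting $Y_n\to Y_{\text{opt}}$ cleanly, since $\Pi$ is a product of two factors $\frac{X}{\varepsilon(X)}$ and $P(Y)$ that are not independently controlled along the sequence, and one must rule out pathological behaviour where $\frac{X_n}{\varepsilon(X_n)}$ drifts while $P(Y_n)$ compensates. The resolution is that $P(Y_n)\le P(Y_{\text{opt}})$ is a hard one-sided bound, so in the difference $\Pi(X_n,h_n)-\Pi(X_{n-1},h_n) = \frac{X_n}{\varepsilon(X_n)}P(Y_n) - \frac{X_{n-1}}{\varepsilon(X_{n-1})}P(Y_{\text{opt}})\ge 0$ the only way the right side can stay nonnegative while the left side tends to $0$ is for $\frac{X_n}{\varepsilon(X_n)}$ to approach $\frac{X_{n-1}}{\varepsilon(X_{n-1})}$ and simultaneously $P(Y_n)\to P(Y_{\text{opt}})$; monotone boundedness of $\frac{X_n}{\varepsilon(X_n)}$ (it is increasing in $X_n$ and bounded by $1/\alpha_0$) closes the argument. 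The remaining bits — continuity passages and the strict-maximum property of $P$ — are routine given Theorem~\ref{thm:Popt}.
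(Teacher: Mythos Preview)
Your argument is essentially correct, but it follows a genuinely different route from the paper's. The paper's proof is a two-line computation: from $h_n=Y_{\text{opt}}/\varepsilon(X_{n-1})$ one writes $\varepsilon(X_n)h_n = Y_{\text{opt}}\,\varepsilon(X_n)/\varepsilon(X_{n-1})$ and asserts that this ratio tends to~$1$, after which the second claim follows by continuity of $\mu\circ I$. No productivity values, no strict-maximum argument for $P$ are invoked. By contrast, you build the convergence indirectly: monotone bounded convergence of $\Pi(X_n,h_n)$ (using the bound $\Pi\le P(Y_{\text{opt}})/\alpha_0$ valid for $s=1$), the factorisation $\Pi=\tfrac{X}{\varepsilon(X)}P(Y)$, convergence of the prefactor $X_n/\varepsilon(X_n)$, and finally the strict unimodality of $P$ to conclude $Y_n\to Y_{\text{opt}}$. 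Your approach is longer but also more self-contained; the paper's step ``$\varepsilon(X_n)/\varepsilon(X_{n-1})\to 1$'' is not justified there and is in fact delicate when $X_n\to\infty$ (the case that ultimately holds by Theorem~\ref{thm:lim}), so your route actually closes a gap the paper leaves open.

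One small simplification you are missing: you write that ``$X_n\ge X_{n-1}$ is not guaranteed a priori'', but it \emph{is} guaranteed by Lemma~\ref{lem:dirX}, since $X_n=\arg\max_X\Pi(X,h_n)$ and that lemma shows $\Pi(\cdot,h_n)$ strictly increases just above $X_{n-1}$. With $X_n\ge X_{n-1}$ in hand, $X_n/\varepsilon(X_n)$ is monotone nondecreasing (the map $X\mapsto X/(\alpha_0X+\alpha_1)$ is increasing) and bounded by $1/\alpha_0$, hence convergent to some $L>0$; then $\Pi(X_{n-1},h_n)=\tfrac{X_{n-1}}{\varepsilon(X_{n-1})}P(Y_{\text{opt}})\to L\,P(Y_{\text{opt}})$ and $\Pi(X_n,h_n)=\tfrac{X_n}{\varepsilon(X_n)}P(Y_n)$ share the same limit, forcing $P(Y_n)\to P(Y_{\text{opt}})$. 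Since $Y_n=\varepsilon(X_n)h_n\ge\varepsilon(X_{n-1})h_n=Y_{\text{opt}}$ and $P$ is strictly decreasing on $[Y_{\text{opt}},\infty)$ (its derivative $\mu(I(-Y))-R$ is negative there), this gives $Y_n\to Y_{\text{opt}}$ cleanly, and the ``obstacle'' you anticipated disappears.
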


\begin{proof}
From the definition of the sequence $(X_{n-1} ,h_n)_{n>0}$, one has $\varepsilon(X_{n-1})h_n = Y_{\text{opt}}$, which means 
\begin{equation*}
\lim_{n\rightarrow \infty}\varepsilon(X_n)h_n = Y_{\text{opt}}\lim_{n\rightarrow \infty} \frac{\varepsilon(X_{n-1})}{\varepsilon(X_n)}  = Y_{\text{opt}}.
\end{equation*}
Denoting by $Y_n= \varepsilon(X_n)h_n$, since $\mu\circ I(-y)$ is a continuous function with respect to $y$ in $\R_+$, one has 
\begin{equation*}
\lim_{n\rightarrow \infty}  \mu(I(X_n, h_n))  = \lim_{n\rightarrow \infty}  \mu(I(Y_n)) = \mu(I(Y_{\text{opt}})) = R. 
\end{equation*}
This concludes the proof.
\end{proof}

This lemma enables us to prove the next theorem, without constraint on the minimal reactor depth.

\begin{theorem}\label{thm:lim}
$\lim_{n\rightarrow \infty} X_n = \infty$, $\lim_{n\rightarrow \infty} h_n = 0$ and $\lim_{n\rightarrow \infty} \Pi(X_n, h_n) = \frac{P(Y_{\text{opt}})}{\alpha_0}$.
\end{theorem}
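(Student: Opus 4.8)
The plan is to exploit the case $s=1$, where $\varepsilon(X) = \alpha_0 X + \alpha_1$, together with the defining recursion~\eqref{eq:optseq} and the monotonicity information from Lemma~\ref{lem:dirX} and Lemma~\ref{lem:limprod}. First I would observe that Lemma~\ref{lem:dirX} forces the sequence $(X_n)$ to be strictly increasing: by construction $X_n = {\rm argmax}_X \Pi(X,h_n)$ while $X_{n-1}$ is only the point at which $\varepsilon(X_{n-1})h_n = Y_{\text{opt}}$, and Lemma~\ref{lem:dirX} exhibits points strictly above $X_{n-1}$ with strictly larger $\Pi(\cdot,h_n)$, so the maximizer $X_n$ satisfies $X_n > X_{n-1}$. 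Hence $(X_n)$ is monotone and therefore converges in $[X_0,\infty]$ to some limit $X_\infty$. Next I would argue $X_\infty = \infty$: if $X_\infty < \infty$, then $h_n = Y_{\text{opt}}/\varepsilon(X_{n-1}) \to Y_{\text{opt}}/\varepsilon(X_\infty) =: h_\infty > 0$, and by continuity of $\Pi$ and the fact that ${\rm argmax}$ would stabilize, we would get $\varepsilon(X_\infty)h_\infty = \varepsilon(X_\infty)Y_{\text{opt}}/\varepsilon(X_\infty) = Y_{\text{opt}}$, meaning $(X_\infty, h_\infty)$ is simultaneously the $h$-optimum for $X_\infty$ and the $X$-optimum for $h_\infty$ — but Lemma~\ref{lem:dirX} (applied with $X_1 = X_\infty$, $h_1 = h_\infty$) produces $X > X_\infty$ with $\Pi(X,h_\infty) > \Pi(X_\infty,h_\infty)$, contradicting that $X_\infty$ maximizes $\Pi(\cdot,h_\infty)$. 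Therefore $X_n \to \infty$, and since $h_n = Y_{\text{opt}}/(\alpha_0 X_{n-1} + \alpha_1) \to 0$, the first two claims follow.

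For the limiting productivity I would use the identity~\eqref{eq:PtoPi}, namely $\Pi(X_n,h_n) = \frac{X_n}{\varepsilon(X_n)} P(Y_n)$ with $Y_n = \varepsilon(X_n)h_n$. By Lemma~\ref{lem:limprod}, $Y_n \to Y_{\text{opt}}$, and since $P$ is continuous (it is an integral of the continuous integrand $\mu\circ I(-y) - R$), $P(Y_n) \to P(Y_{\text{opt}})$. Meanwhile, with $s=1$, $\frac{X_n}{\varepsilon(X_n)} = \frac{X_n}{\alpha_0 X_n + \alpha_1} \to \frac{1}{\alpha_0}$ as $X_n \to \infty$. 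Multiplying these two convergent factors gives $\lim_{n\to\infty}\Pi(X_n,h_n) = \frac{P(Y_{\text{opt}})}{\alpha_0}$, as claimed.

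The main obstacle I anticipate is making the contradiction step in the "$X_\infty = \infty$" argument fully rigorous: one must be careful that the map $X \mapsto {\rm argmax}_X \Pi(X,h)$ is well-defined (single-valued) near the relevant regime and that the passage to the limit inside ${\rm argmax}$ is justified — essentially an upper-semicontinuity / Berge-type argument, or alternatively a direct one-line argument showing that $\Pi(\cdot,h_\infty)$ cannot have its maximum at a point where the net growth at the bottom is zero whenever $\alpha_1 > 0$. I would favour the direct route: evaluate $\frac{\partial \Pi}{\partial X}(X_\infty,h_\infty)$ (or use the finite-difference estimate already carried out in the proof of Lemma~\ref{lem:dirX}) and show it is strictly positive at the compensation point, which is precisely the content of Lemma~\ref{lem:dirX} and makes the contradiction immediate. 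Everything else is routine limit-taking once the monotonicity of $(X_n)$ and the continuity of $P$ are in hand.
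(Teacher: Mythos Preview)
Your argument is correct and follows the same skeleton as the paper's: monotonicity of $(X_n)$ via Lemma~\ref{lem:dirX}, a contradiction to rule out a finite limit, and then the identity~\eqref{eq:PtoPi} for the productivity limit. Two minor differences are worth recording.

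For the divergence of $(X_n)$, the paper packages the contradiction by invoking Theorem~\ref{thm:globopt} (a finite limit $(X_{\lim},h_{\lim})$ would be a global optimum of $\Pi$), whereas you re-derive the contradiction directly from Lemma~\ref{lem:dirX}. These are the same argument at different levels of abstraction; your ``direct route'' via the strict increase of $\Pi(\cdot,h_\infty)$ at the compensation point is exactly how Theorem~\ref{thm:globopt} is proved, and your worry about passing the $\mathrm{argmax}$ to the limit is handled simply by continuity of $\Pi$: from $\Pi(X_n,h_n)\geq \Pi(X',h_n)$ for the fixed $X'>X_\infty$ furnished by Lemma~\ref{lem:dirX}, let $n\to\infty$.

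For the productivity limit, you compute $\Pi(X_n,h_n)=\dfrac{X_n}{\varepsilon(X_n)}P(Y_n)$ and pass to the limit using Lemma~\ref{lem:limprod} and continuity of $P$. The paper instead uses the squeeze
\[
\Pi(X_{n-1},h_n)\;<\;\Pi(X_n,h_n)\;<\;\Pi(X_n,h_{n+1}),
\]
where both outer terms equal $\dfrac{X}{\varepsilon(X)}P(Y_{\text{opt}})$ exactly (since $\varepsilon(X_{n-1})h_n=\varepsilon(X_n)h_{n+1}=Y_{\text{opt}}$ by construction) and hence tend to $P(Y_{\text{opt}})/\alpha_0$ without appealing to $Y_n\to Y_{\text{opt}}$ or to the continuity of $P$. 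Your route is shorter to state; the paper's sandwich is slightly more self-contained because it sidesteps Lemma~\ref{lem:limprod} altogether.
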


\begin{proof}
By Lemma~\ref{lem:dirX}, one has $(X_n)_{n\in \N}$ which is a strictly increasing sequence. 
Hence the sequence $(h_n)_{n\in\N^*}$ is strictly decreasing by its construction~\eqref{eq:optseq}.
Since for each $n\in\N^*$, $h_n>0$, then this sequence converges to a limit that we denote by $h_{\lim}$.
Assume that $h_{\lim}>0$, then from~\eqref{eq:optseq}, one has 
\begin{equation*}
h_{\lim} = \lim_{n\rightarrow \infty}h_n = \lim_{n\rightarrow \infty}\frac{Y_{\text{opt}}}{\varepsilon(X_{n-1})}  = Y_{\text{opt}}\lim_{n\rightarrow \infty}\frac1{\alpha_0X_{n-1}+\alpha_1},
\end{equation*}
which means that $\lim_{n\rightarrow \infty} X_n =: X_{\lim} < \infty$.
Then $(X_{\lim} , h_{\lim} )$ is a global optimum, hence a contradiction with Theorem~\ref{thm:globopt}.
Therefore $h_{\lim}=0$, which means that $X_{\lim} = \infty$.

On the other hand, by the construction of these two sequences $(X_{n-1} ,h_n)_{n>0}$, $(X_n ,h_n)_{n>0}$ and Lemma~\ref{lem:dirX}, one has 
\begin{equation*}
\Pi(X_{n-1}, h_n)< \Pi(X_n, h_n) < \Pi(X_n, h_{n+1}).
\end{equation*}
Using~\eqref{eq:PtoPi}, one has $\Pi(X_{n-1}, h_n) = \frac{X_{n-1}}{\varepsilon(X_{n-1})}P(Y_{\text{opt}})$ and $\Pi(X_n, h_{n+1}) = \frac{X_n}{\varepsilon(X_n)}P(Y_{\text{opt}})$.
Taking the limit similarly as in the above inequalities gives $\lim_{n\rightarrow \infty} \Pi(X_n, h_n) = \frac{P(Y_{\text{opt}})}{\alpha_0}$.
This concludes the proof.
\end{proof}

\subsubsection{The case \texorpdfstring{$s < 1$}{Lg}}

In the case $s < 1$, it is much more complicated to follow the same strategy as in the proof of Lemma~\ref{lem:dirX} to show that the couple $(X_1,h_1)$ satisfying $\varepsilon(X_1)h_1=Y_{\text{opt}}$ is not the optimum in the direction of $X$ for a given $h_1$. 
However, one can still prove this by computing the partial derivative of $\Pi$ with respect to $X$, since the explicit formula of $\Pi$ can be determined (see~\ref{app:computemubar}). 
Therefore, one can keep the same definition of the sequences~\eqref{eq:optseq}.
Lemma~\ref{lem:limprod} remains true in this case by the definition of the sequence $(X_n, h_n)$.
However the last limit in Theorem~\ref{thm:lim} will be different as shown in the next theorem.

\begin{theorem}\label{thm:limbis}
$\lim_{n\rightarrow \infty} X_n = \infty$, $\lim_{n\rightarrow \infty} h_n = 0$ and $\lim_{n\rightarrow \infty} \Pi(X_n, h_n) = +\infty$.
\end{theorem}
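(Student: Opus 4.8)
The plan is to mirror the structure of the proof of Theorem~\ref{thm:lim}, keeping the first two limits essentially unchanged and only revisiting the third. First I would establish $\lim_{n\to\infty} X_n = \infty$ and $\lim_{n\to\infty} h_n = 0$ exactly as before: by the analogue of Lemma~\ref{lem:dirX} for $s<1$ (obtained, as the text notes, by differentiating the explicit formula for $\Pi$ in $X$), the sequence $(X_n)$ is strictly increasing, hence $(h_n)$ is strictly decreasing by~\eqref{eq:optseq} and converges to some $h_{\lim}\ge 0$. If $h_{\lim}>0$, then from $h_n = Y_{\text{opt}}/\varepsilon(X_{n-1}) = Y_{\text{opt}}/(\alpha_0 X_{n-1}^s + \alpha_1)$ we would get $X_{n-1}^s$ bounded, so $X_n \to X_{\lim}<\infty$, and $(X_{\lim},h_{\lim})$ would be a global optimum, contradicting Theorem~\ref{thm:globopt} (which holds for $\alpha_1>0$ irrespective of $s$). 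Hence $h_{\lim}=0$ and $X_{\lim}=\infty$.

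The new part is the last limit. As in Theorem~\ref{thm:lim}, the chain of inequalities $\Pi(X_{n-1},h_n) < \Pi(X_n,h_n) < \Pi(X_n,h_{n+1})$ together with~\eqref{eq:PtoPi} gives
\begin{equation*}
\frac{X_{n-1}}{\varepsilon(X_{n-1})}P(Y_{\text{opt}}) < \Pi(X_n,h_n) < \frac{X_n}{\varepsilon(X_n)}P(Y_{\text{opt}}).
\end{equation*}
Now the crucial difference from the case $s=1$: here $\frac{X}{\varepsilon(X)} = \frac{X}{\alpha_0 X^s + \alpha_1}$, which behaves like $\frac{1}{\alpha_0}X^{1-s}$ for large $X$ and therefore tends to $+\infty$ as $X\to\infty$ (since $0<s<1$). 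Because $X_{n-1}\to\infty$ and $P(Y_{\text{opt}})>0$, the left-hand side diverges to $+\infty$, and by the squeeze this forces $\Pi(X_n,h_n)\to+\infty$. This concludes the argument.

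The main obstacle is not in this final chain of inequalities — which is a direct adaptation — but in justifying that the analogue of Lemma~\ref{lem:dirX} holds for $s<1$, i.e. that the couple $(X_1,h_1)$ with $\varepsilon(X_1)h_1=Y_{\text{opt}}$ is strictly dominated in the $X$-direction, so that $(X_n)$ is genuinely strictly increasing and the sequences~\eqref{eq:optseq} are well defined with the monotonicity used above. The text defers this to the explicit computation of $\partial_X \Pi$ via the closed form of $\bar\mu$ in~\ref{app:computemubar}; I would carry out that sign analysis at $Y=Y_{\text{opt}}$, where $\mu(I(Y_{\text{opt}}))=R$ kills the "boundary" contribution and leaves a term controlled by $\partial_X\big(X/\varepsilon(X)\big) = \big(\alpha_1 + (1-s)\alpha_0 X^s\big)/\varepsilon(X)^2 > 0$, which is exactly what makes $\Pi$ increase in $X$ there. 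Everything else — the convergence/contradiction argument for $h_{\lim}$, the appeal to Lemma~\ref{lem:limprod}, and the squeeze — transfers verbatim from the $s=1$ case.
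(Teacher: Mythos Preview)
Your proposal is correct and follows essentially the same approach as the paper: you reproduce the first two limits by the same monotonicity/contradiction argument, then sandwich $\Pi(X_n,h_n)$ between $\frac{X_{n-1}}{\varepsilon(X_{n-1})}P(Y_{\text{opt}})$ and $\frac{X_n}{\varepsilon(X_n)}P(Y_{\text{opt}})$ via~\eqref{eq:PtoPi}, and observe that $X/\varepsilon(X)\sim X^{1-s}/\alpha_0\to+\infty$ since $s<1$. Your additional remarks on justifying the $X$-direction monotonicity through the sign of $\partial_X\big(X/\varepsilon(X)\big)=\big(\alpha_1+(1-s)\alpha_0 X^s\big)/\varepsilon(X)^2>0$ make explicit what the paper only sketches.
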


\begin{proof}
Following the similar strategy as in Theorem~\ref{thm:lim}, one can show that $\lim_{n\rightarrow \infty} X_n = \infty$, $\lim_{n\rightarrow \infty} h_n = 0$.
On the other hand, by the construction of the two sequences $(X_{n-1} ,h_n)_{n>0}$, $(X_n ,h_n)_{n>0}$, one has $\Pi(X_{n-1}, h_n)< \Pi(X_n, h_n) < \Pi(X_n, h_{n+1}).$
Using~\eqref{eq:PtoPi} and passing $n$ to the limit, one finds 
\begin{equation*}
\begin{split}
&\lim_{n\rightarrow \infty} \Pi(X_{n-1}, h_n) = \lim_{n\rightarrow \infty} \frac{X_{n-1}}{\varepsilon(X_{n-1})}P(Y_{\text{opt}}) = \frac{P(Y_{\text{opt}})}{\alpha_0}  \lim_{n\rightarrow \infty} X_{n-1}^{1-s} = +\infty,\\
&\lim_{n\rightarrow \infty} \Pi(X_n, h_{n+1}) = \lim_{n\rightarrow \infty} \frac{X_n}{\varepsilon(X_n)}P(Y_{\text{opt}}) = \frac{P(Y_{\text{opt}})}{\alpha_0}  \lim_{n\rightarrow \infty} X_n^{1-s} = +\infty.
\end{split}
\end{equation*}
Therefore, $\lim_{n\rightarrow \infty} \Pi(X_{n-1}, h_n) = +\infty$.
\end{proof}

Note that for real  reactors, there is  a constraint on the minimal reactor depth $h_{\lim}$ (below which mixing is no more possible). 
An optimal solution can then be found in this case.
Indeed, as shown in Theorem~\ref{thm:limbis} (or Theorem~\ref{thm:lim}), a higher productivity can be obtained for higher biomass concentration and smaller reactor depth. 
Considering the minimal reactor depth, one can find the optimal biomass concentration  maximizing the productivity.




\section{Optimal control implementation in closed loop}\label{sec:controller}

As shown in previous section, there exists optimal biomass concentration for a given reactor depth $h$.
In this section, let us show that the evolution of the biomass concentration $X$ (defined in~\eqref{eq:dotX}) can be stabilized to a desired biomass concentration by applying an appropriate controller. 
More precisely, we consider the dilution rate $D$ in~\eqref{eq:dotX} as a controller.
Let us denote by $X^\star\in(0, X(0))$ the desired biomass concentration.
\begin{assumption}[H1]
\renewcommand{\labelenumi}{\alph{enumi}.} 
We assume that:
\begin{enumerate}
\item the quantity $\Phi:=(\bar \mu(X,h) -R)X$ is measured on-line from the plant,
\item the growth rate for the influent light intensity is larger than the respiration (i.e. $\mu(I_s)>R$),
\item the maximal dilution rate $D_{\max}$ is larger than the maximal growth rate $\mu_{\max}$.
\end{enumerate}
\end{assumption}

The quantity $\Phi$ denotes the average oxygen production which is available from the reactor.
Indeed, oxygen sensors or numerical estimators can be applied to obtain the quantity $\Phi$.
In the sequel, we assume that (H1) holds.
Then we have the following result.

\begin{proposition}\label{prop:controller}
The control law 
\begin{equation}\label{eq:D}
D =\left\{
\begin{array}{lr}
D_{\max} & X \geq \bar{X}\\
(\bar \mu(X,h) - R)\frac X{X^\star} & X < \bar{X}
\end{array}
\right.
\end{equation}
globally stabilizes equation~\eqref{eq:dotX} towards the positive point $X^\star$.
\end{proposition}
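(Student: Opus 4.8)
The plan is to treat the two branches of the control law separately and show that in both regions the trajectory is driven monotonically toward $X^\star$, using a Lyapunov-type argument combined with an invariance analysis. First I would substitute the control law~\eqref{eq:D} into the dynamics~\eqref{eq:dotX} to obtain, on the region $X<\bar X$,
\begin{equation*}
\dot X = (\bar\mu(X,h)-R)X - (\bar\mu(X,h)-R)\frac{X}{X^\star}\,X = (\bar\mu(X,h)-R)X\Bigl(1-\frac{X}{X^\star}\Bigr),
\end{equation*}
and on the region $X\geq\bar X$, $\dot X = (\bar\mu(X,h)-R-D_{\max})X$. The key structural observations are that $\bar\mu(X,h)-R>0$ whenever $X$ is below the compensation biomass (this is where assumption (H1)b, $\mu(I_s)>R$, enters, guaranteeing $\bar\mu>R$ at least for small enough optical depth, hence for small enough $X$), and that by (H1)c we have $\bar\mu(X,h)-R-D_{\max}\leq\mu_{\max}-D_{\max}<0$ everywhere, so $\dot X<0$ on $\{X\geq\bar X\}$. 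I would choose $\bar X$ to be (or to exceed) the compensation biomass, i.e. the value at which $\bar\mu(X,h)=R$, and also take $\bar X\geq X^\star$; this makes $\{X<\bar X\}$ forward-invariant because the field points strictly inward across $X=\bar X$.

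Next I would analyze the closed-loop equation on the invariant region $\{0<X<\bar X\}$. There the sign of $\dot X$ is governed entirely by the factor $(1-X/X^\star)$ together with the sign of $(\bar\mu(X,h)-R)$, which is positive on that region by the choice of $\bar X$. Hence $\dot X>0$ for $X<X^\star$ and $\dot X<0$ for $X^\star<X<\bar X$: every trajectory is strictly monotone toward $X^\star$. To get convergence (not merely monotonicity) I would either invoke the monotone-bounded argument — a monotone trajectory confined to a compact interval converges, and its limit must be an equilibrium of the scalar ODE, which can only be $X^\star$ since $\bar\mu-R\neq0$ away from the compensation biomass — or equivalently exhibit the Lyapunov function $V(X)=(X-X^\star)^2$ (or $V(X)=X/X^\star-1-\ln(X/X^\star)$), for which $\dot V = 2(X-X^\star)(\bar\mu(X,h)-R)X(1-X/X^\star) = -\frac{2}{X^\star}(X-X^\star)^2(\bar\mu(X,h)-R)X<0$ for $X\neq X^\star$. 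For the region $\{X\geq\bar X\}$, since $\dot X\leq(\mu_{\max}-D_{\max})X<0$, the trajectory decreases and must cross $X=\bar X$ in finite time (bounded above by a comparison with the linear equation $\dot X=(\mu_{\max}-D_{\max})X$), after which it enters the invariant region and the previous argument applies. Starting with $X(0)<\bar X$ is also covered since $X^\star<X(0)$ by hypothesis and the region is invariant.

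The main obstacle I expect is justifying the sign claim $\bar\mu(X,h)-R>0$ on the whole interval $(0,\bar X)$ and the existence of a well-defined compensation biomass $\bar X$: one must check that $X\mapsto\bar\mu(X,h)$ is monotone decreasing in $X$ (for fixed $h$), so that $\bar\mu-R$ changes sign exactly once. This follows from the fact that $\bar\mu$ depends on $X$ only through the optical depth $Y=\varepsilon(X)h$, that $\varepsilon$ is increasing in $X$, and that $\bar\mu(Y)=\frac1Y\int_0^Y\mu(I(-y))\,dy$ is decreasing in $Y$ past its maximum — combined with (H1)b which puts $\mu(I_s)=\mu(I(0))>R$ so that $\bar\mu(Y)>R$ for $Y$ small and the compensation value $Y_{\mathrm{opt}}$ from Theorem~\ref{thm:Popt} gives the threshold $\bar X=\varepsilon^{-1}(Y_{\mathrm{opt}}/h)$. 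A secondary technical point is ruling out finite-time blow-up or reaching $X=0$: positivity of $X$ is immediate from $\dot X$ being a multiple of $X$, and boundedness on $(0,\bar X)$ is automatic, so no blow-up occurs and global (in time) existence holds. Assembling these pieces — invariance of $\{X<\bar X\}$, finite-time entry from $\{X\geq\bar X\}$, and strict Lyapunov decrease toward $X^\star$ inside — yields global asymptotic stability of $X^\star$.
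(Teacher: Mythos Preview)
Your approach is essentially the same as the paper's: substitute the control law into~\eqref{eq:dotX}, show that trajectories starting in $\{X\geq\bar X\}$ decrease and reach $\{X<\bar X\}$ in finite time (using $\bar\mu-R-D_{\max}<0$ from (H1)c), observe that $\{X<\bar X\}$ is forward-invariant, and on that region use the factored form $\dot X=(\bar\mu-R)\tfrac{X}{X^\star}(X^\star-X)$ to obtain monotone convergence to $X^\star$. Your explicit Lyapunov function $V=(X-X^\star)^2$ and your appeal to the monotonicity of $Y\mapsto\bar\mu(Y)$ are mild elaborations; the paper simply asserts ``$0<\bar\mu(X,h)-R<\mu_{\max}-R$'' on the invariant region and then claims the bound $X^\star\leq X(t)\leq X(0)$ by ``integrating.''

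One slip to correct: you write that you would take $\bar X$ ``to be (or to exceed) the compensation biomass,'' but you need the \emph{opposite} inequality. If $\bar X$ exceeds the compensation value $X_c$ (where $\bar\mu(X_c,h)=R$), then on $(X_c,\bar X)$ one has $\bar\mu-R<0$, so the control $D=(\bar\mu-R)X/X^\star$ becomes negative and your sign analysis of $\dot X$ reverses there; invariance of $\{X<\bar X\}$ also fails. What you want---and what you in fact use two sentences later---is $\bar X\leq X_c$, so that $\bar\mu-R>0$ throughout $(0,\bar X)$. Note also that $\bar X$ is not yours to choose: the Remark following the Proposition fixes it via $\bar X>X^\star$ and $(\mu_{\max}-R)\bar X/X^\star<D_{\max}$. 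Both the paper's proof and yours tacitly need, in addition, that $\bar X$ lies below the compensation biomass.
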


\begin{remark}
$\bar{X}>X^\star$ is chosen to determine the area where the control will be at its maximum rate. It is defined so that  $(\mu_{\max} - R)\frac{\bar{X}}{X^\star}<D_{max}$.

\end{remark}

\begin{proof}
From the definition of~\eqref{eq:D}, the control variable $D$ is positive. 
On the other hand, $\bar\mu(0,h)>R$, $\lim_{X\rightarrow \infty} \bar \mu(X,h)= 0$ and $\bar \mu(\cdot,h)$ is continuously decreasing with respect to $X$.
If the initial state $X(0)\geq \bar{X}$, then replacing $D = D_{\max}$ into~\eqref{eq:dotX} gives
\begin{equation*}\label{eq:Xlarge}
\dot X = (\bar \mu(X,h) - R - D_{\max})X.
\end{equation*}
In particular, $\bar \mu(X(0),h) - R - D_{\max}<0$, hence there exists a time $t_1>0$ such that the state $X$ decreases from $0$ to $t_1$ and $X(t_1)=\bar{X}$.
When $t>t_1$, $D = \frac{\Phi}{X^\star}$.
Replacing $D = \frac{\Phi}{X^\star}$ into~\eqref{eq:dotX} gives
\begin{equation}\label{eq:Xsmall}
\dot X = (\bar \mu(X,h) - R)\frac X{X^\star}(X^\star - X) = \frac{\Phi}{X^\star}(X^\star - X).
\end{equation}
Note that the system is now in the positive invariant region $X < \bar{X}$ and cannot come back again to $X \geq \bar{X}$. Moreover, $0<\bar \mu(X,h) - R<\mu_{\max} -R$. Then, integrating~\eqref{eq:Xsmall} gives $\forall t\geq t_1, \quad 0 < X^\star \leq X(t) \leq X(0)$.

In the case the initial state $X(0)\in(0,\bar{X})$, the control variable $D=\frac{\Phi}{X^\star}$ and the evolution equation~\eqref{eq:dotX} once again becomes~\eqref{eq:Xsmall}, hence we follow the small strategy as above.

Finally we find in both two cases that
\begin{equation*}
\forall t\geq 0, \quad 0 < X^\star \leq X(t) \leq X(0).
\end{equation*}
Therefore, the state $X^\star$ is globally exponentially stable for the evolution equation~\eqref{eq:dotX} by using the control law~\eqref{eq:D}. 
\end{proof}

\section{Numerical results}\label{sec:num}

In this section, we will illustrate some optimal conditions to maximize the algal productivity.
In this way, we first introduce an algorithm to compute the sequences defined in~\eqref{eq:optseq}.
We then give the parameters that we use for the numerical experiments and show some numerical results.

\subsection{Numerical algorithm}

In practice, one can use the next algorithm to compute for two sequences $(X_{n-1} ,h_n)_{n > 0}$ and $(X_n ,h_n)_{n > 0}$ defined by~\eqref{eq:optseq}.
\begin{algorithm}
\begin{algorithmic}[1]
\State \textbf{Input}: $Y_{\text{opt}}$, $n_{\max}$ and $X_0$.
\State \textbf{Output}: $(X_n ,h_n)_{n > 0}$
\State Set $n:= 0$.
\While{$n<n_{\max}$}
\State Set $n = n + 1$.
\State Compute $h_n=Y_{\text{opt}}/\varepsilon(X_{n-1})$.
\State Compute $X_n$ such that d$\Pi_X(X_n, h_n) = 0$.
\EndWhile
\end{algorithmic}
\caption{Search Optimum}
\label{alg:opt}
\end{algorithm}

\subsection{Parameter settings}

The Han model parameters are taken from~\cite{Grenier2020} and recalled in Table~\ref{tab:han}.
\begin{table}[htpb]
\caption{Parameter values for Han Model.}
\begin{center}
\begin{tabular}{|c|c|c|}
\hline
$k_r$ & 6.8 $10^{-3}$ & $\si{s^{-1}}$\\
\hline
$k_d$ & 2.99 $10^{-4}$  & -\\
\hline
$\tau$ & 0.25 & $\si{s}$\\
\hline
$\sigma_H$ & 0.047 & $\si{m^2.\mu mol^{-1}}$\\
\hline
$k_H$ & 8.7 $10^{-6}$ & -\\
\hline
$R$ & 1.389 $10^{-7}$ & $\si{s^{-1}}$\\
\hline
\end{tabular}
\end{center}
\label{tab:han}
\end{table}
Parameters $\mu_{\max}=\SI{1.64}{d^{-1}}$, $\theta=4.09\times 10^{-7}$ and $I^*=\SI{202.93}{\mu mol.m^{-2}.s^{-1}}$ are then derived from equation~\eqref{eq:hantohal}.
The considered surface light intensity is $I_s=\SI{2000}{\mu mol.m^{-2}.s^{-1}}$.
For $s=1$, we take from~\cite{Martinez201811} the specific light extinction coefficient for the species {\it Chlorella pyrenoidosa} $\alpha_0=\SI{0.2}{m^2.g^{-1}}$ and the background turbidity $\alpha_1=\SI{10}{m^{-1}}$.
Note that for the case where $s<1$, we compute the coefficient $\alpha_0(s)$ to find the one providing an extinction coefficient as close as possible to the reference linear case which is  generally the one measured:
\begin{equation}\label{eq:alpha0}
\alpha_0(s) := \rm{arg min}_{X\in[X_{\min},X_{\max}]}| \alpha_0(1) X - \alpha_0(s) X^s |.
\end{equation}

\subsection{Numerical study}

In this section, we provide some numerical tests to illustrate the influence of the water depth $h$, the biomass concentration $X$ and the light extinction function $\varepsilon(X)$ on algal productivity.

\subsubsection{Evaluation of different light extinction coefficient}

As mentioned in the previous section, the light extinction coefficient $\alpha_0$ needs to be better estimated when $s<1$, in comparison with the reference case $s=1$.
For this reason, for a range of biomass concentration $X$ in $[0,1000]$ ($\si{g\cdot m^{-3}}$), we use~\eqref{eq:alpha0} to find $\alpha_0$ that provide the same average extinction rate.
Figure~\ref{fig:epsX} shows $\varepsilon(X)$ defined by~\eqref{eq:eps} for different values of $s$ when the background turbidity $\alpha_1>0$.
\begin{figure}[htpb]
\centering
\includegraphics[scale=0.3]{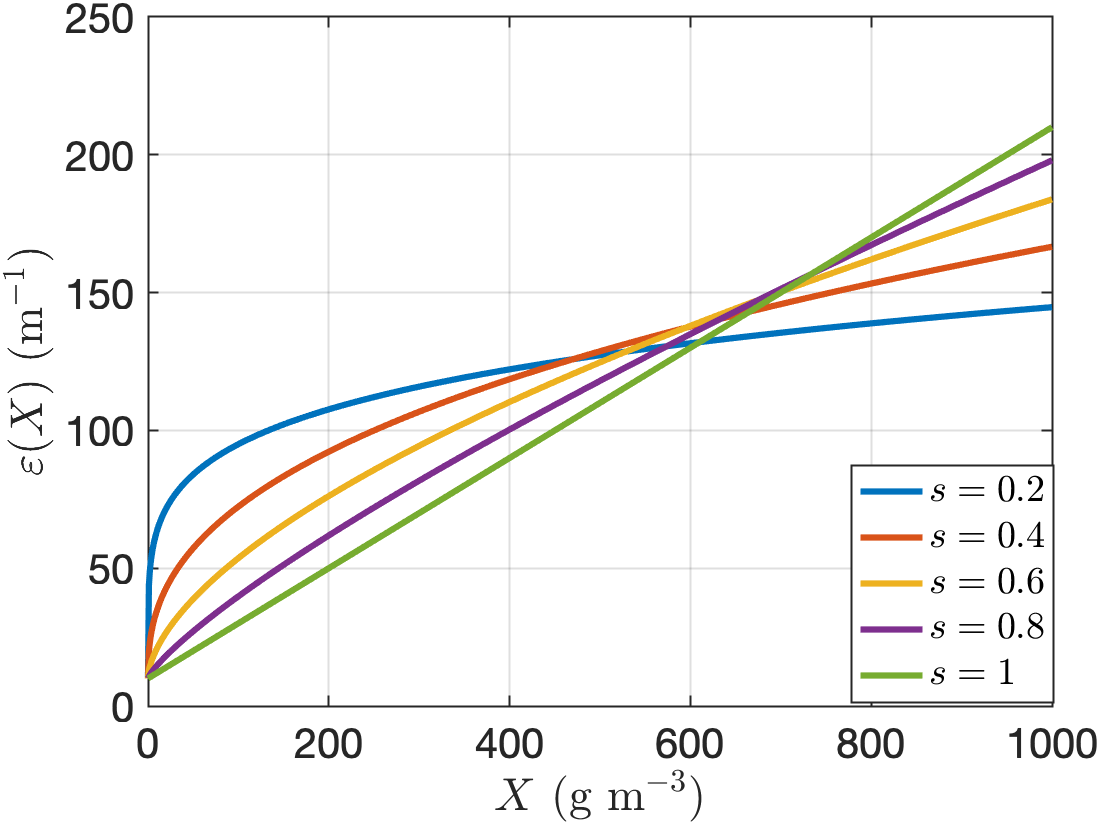}
\caption{$\varepsilon(X)$ with respect to $X$ for $s\in\{0.2,0.4,0.6,0.8,1\}$.}
\label{fig:epsX}
\end{figure}

\subsubsection{Global optimum of optical depth}

The optimal optical depth $Y_{\text{opt}}$ can be computed explicitly using~\eqref{eq:Yopt} once the  light intensity at the reactor surface  $I_s$ and the model parameters ($\theta$, $\mu_{\max}$, $I^*$, $R$)  are fixed.
Figure~\ref{fig:Pmuy} presents the evolution of the growth rate $\mu$ and optical depth productivity $P$ with respect to $y$ for different value of $s$ and $\alpha_1$.
\begin{figure}[htpb]
\centering
\includegraphics[scale=0.25]{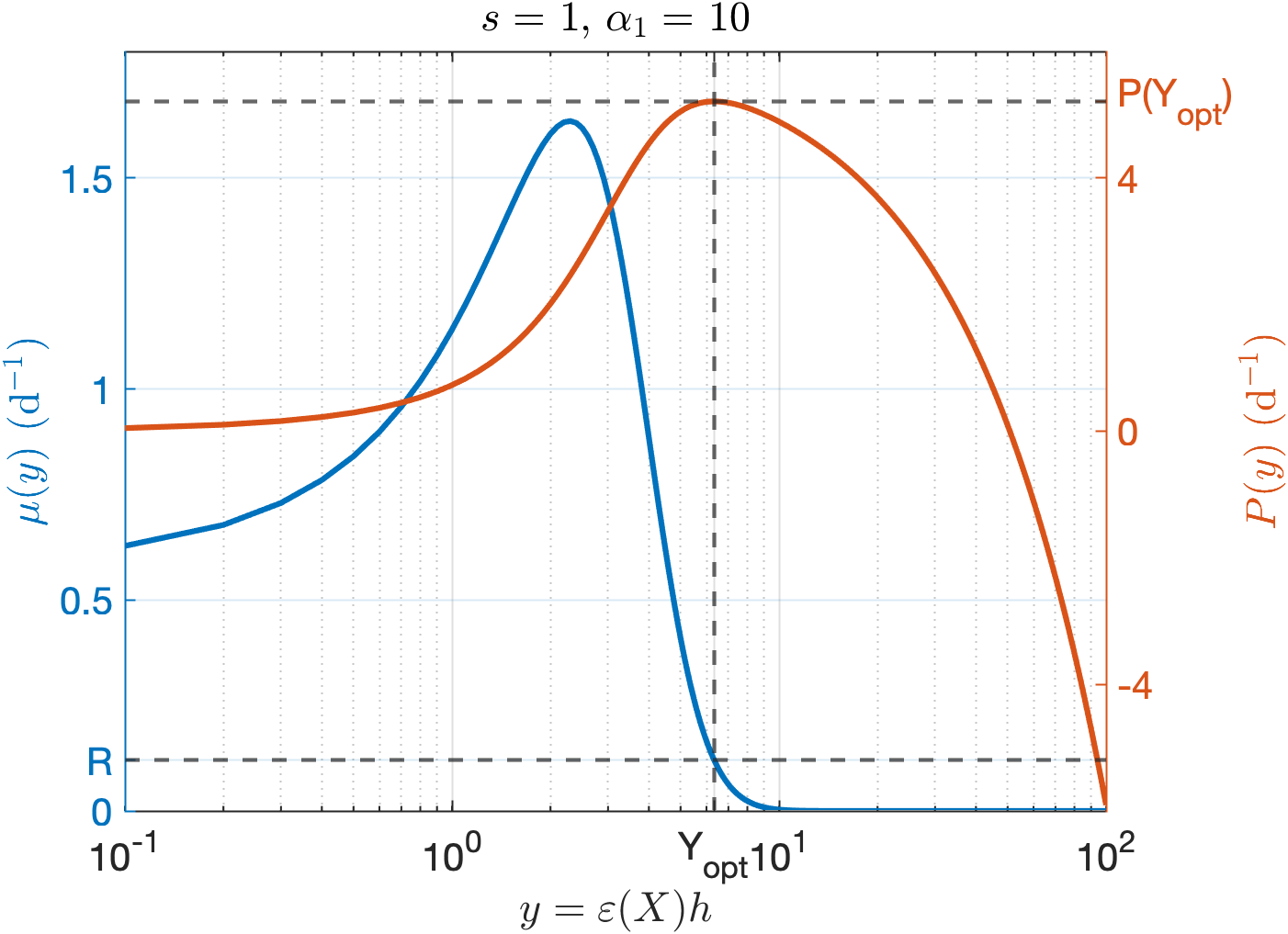}
\includegraphics[scale=0.25]{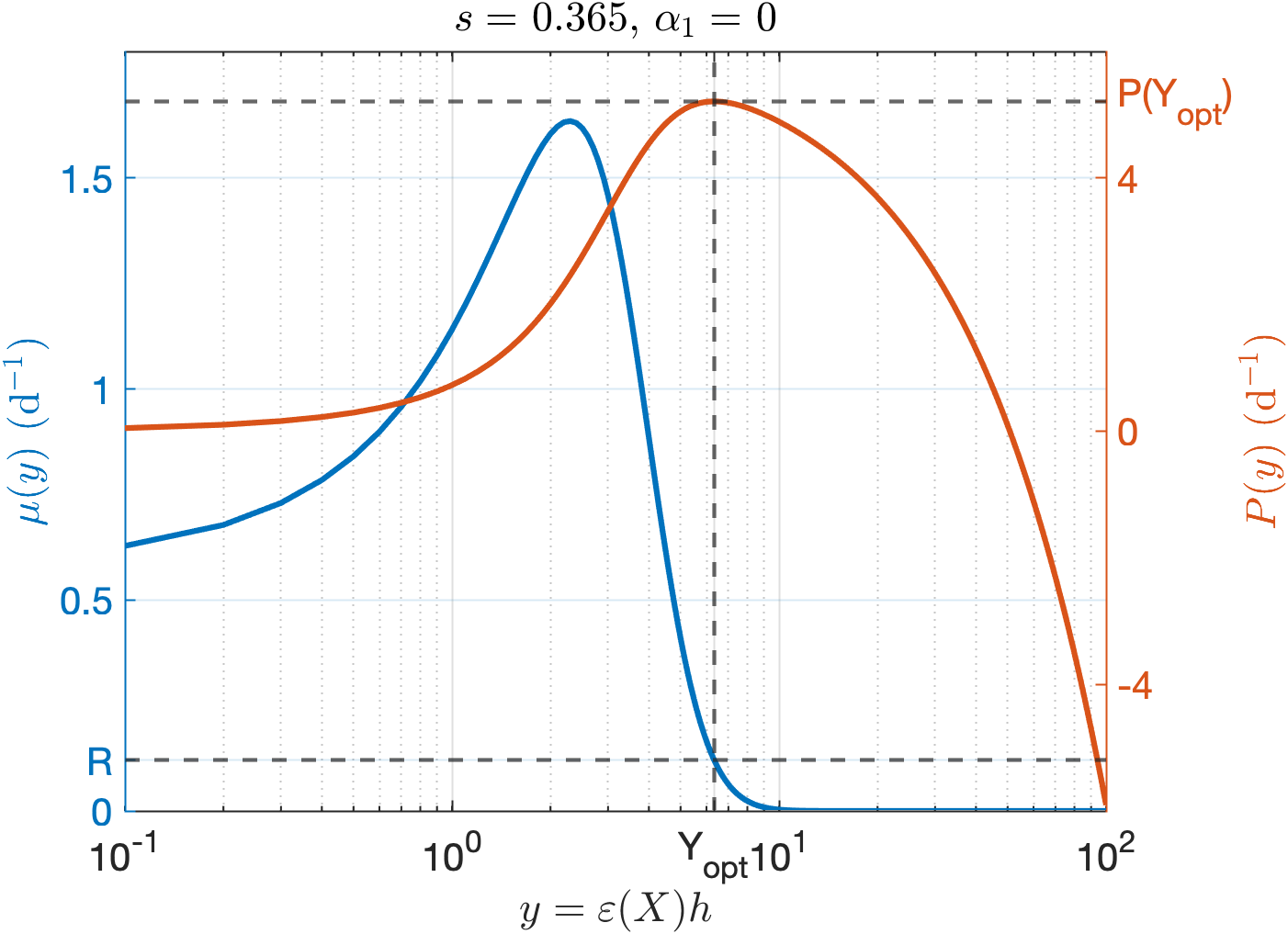}
\caption{Growth rate $\mu$ and optical depth productivity $P$ with respect to $y$.
Left: $s = 1$ and $\alpha_1=\SI{10}{m^{-1}}$.
Right: $s = 0.365$ and $\alpha_1=\SI{0}{m^{-1}}$.}
\label{fig:Pmuy}
\end{figure}
One can see that the optimum is obtained with $Y_{\text{opt}} = 6.337$, which also satisfies numerically $\mu(I(Y_{\text{opt}})) = R$.
Moreover, as mentioned in Remark~\ref{rem:Yopt}, $Y_{\text{opt}}$ does not change for other values of $\alpha_1$ and $s$.

In the same way, for a given biomass concentration $X$, Corollary~\ref{cor:hopt} provides a condition to determine the optimal depth to maximize the surface biomass productivity $\Pi$. 
Figure~\ref{fig:Pih} illustrates this corollary with a biomass concentration $X = \SI{50}{g \cdot m^{-3}}$ for different values of $s$ and $\alpha_1$.
\begin{figure}[htpb]
\centering
\includegraphics[scale=0.3]{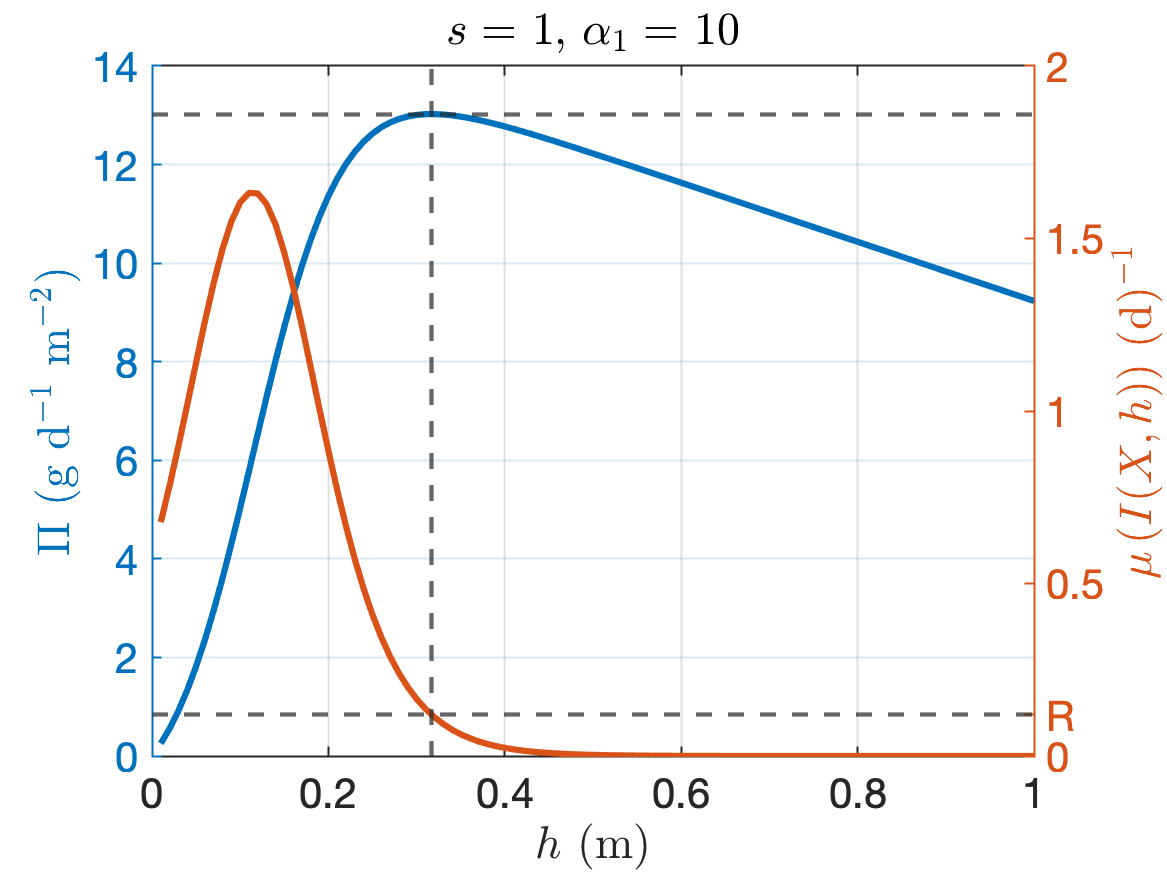}
\includegraphics[scale=0.3]{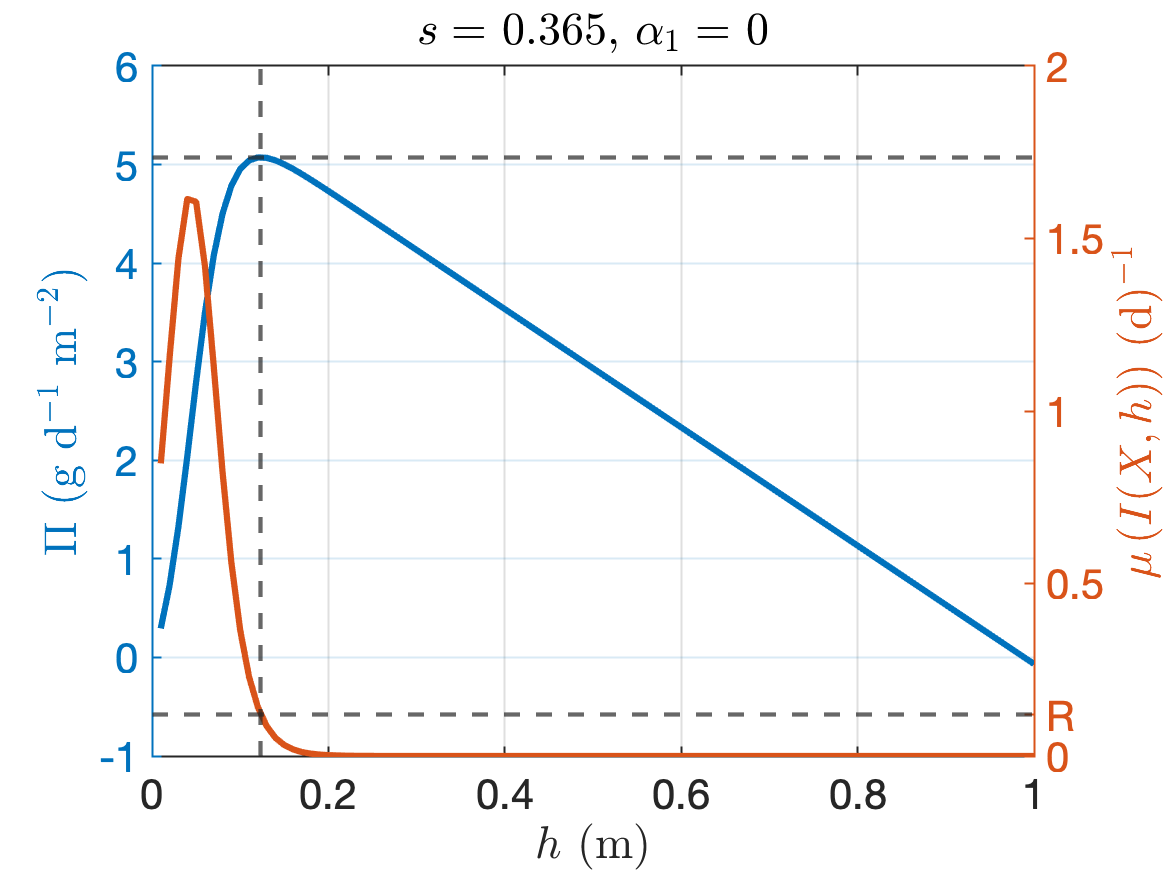}
\caption{Productivity ($\Pi$) and net growth rate ($\mu_{\text{net}}(X, h)$) with respect to depth ($h$) for $X = \SI{50}{g \cdot m^{-3}}$.
Left: $s = 1$ and $\alpha_1=\SI{10}{m^{-1}}$.
Right: $s = 0.365$ and $\alpha_1=\SI{0}{m^{-1}}$.}
\label{fig:Pih}
\end{figure}
Note that the optimal depth $h^*$ satisfies the relation $\varepsilon(X)h^* = Y_{\text{opt}}$. 
In other words, one can see that this optimum satisfies $\mu\left(I(X, h^*)\right) = R$.
It is worth remarking that the range of the productivity $\Pi$ changes for different value of $s$ and $\alpha_1$, this motivates the next test, where we study how these parameters affect algal growth.

\subsubsection{Influence of the background turbidity and \texorpdfstring{$s$}{Lg}}

Here we study the influence of the background turbidity $\alpha_1$ and the value of $s$ on the productivity $\Pi$. 
We keep the biomass concentration value $X=\SI{50}{g \cdot m^{-3}}$ and compute $h$ by using the relation $\varepsilon(X)h = Y_{\text{opt}}$ for different values of $\alpha_1$ and $s$.
Note that the depth $h$ computed in this way is the optimum to maximize the productivity for the given biomass concentration.
Figure~\ref{fig:Pialpha1} represents the optimal surface biomass productivity $\Pi$ with respect to the background turbidity.
\begin{figure}[htpb]
\centering
\includegraphics[scale=0.3]{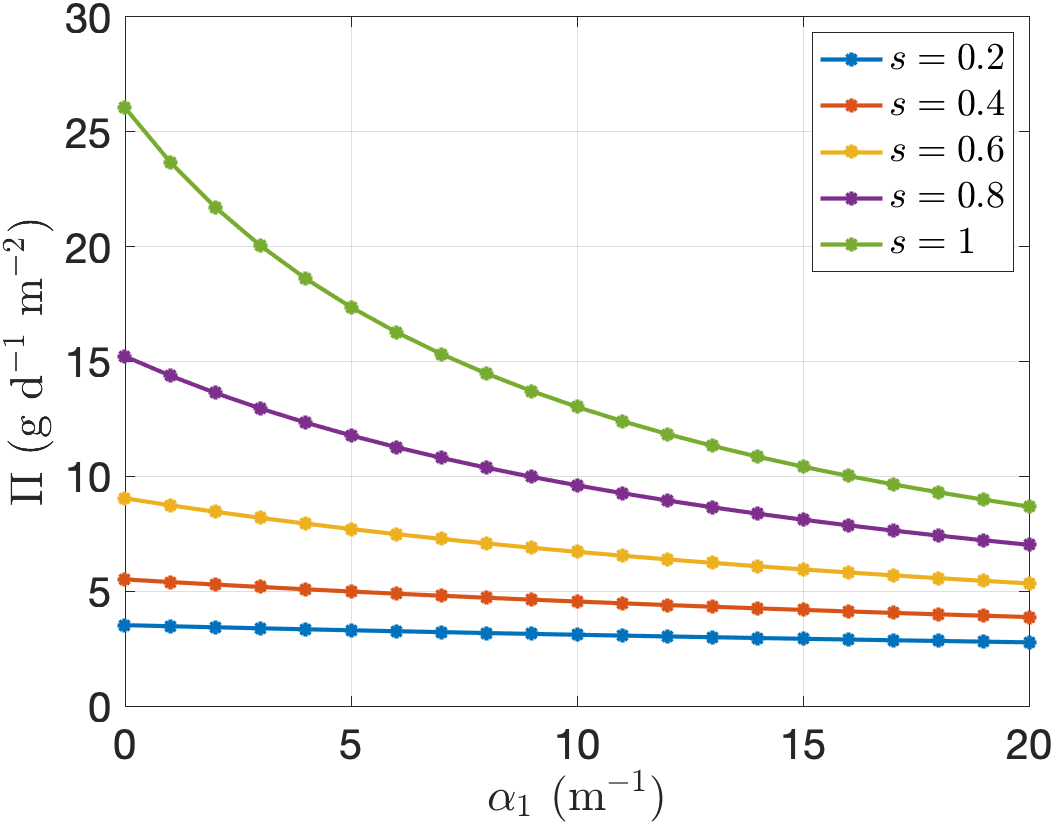}
\caption{Optimal surface biomass productivity with respect to the background turbidity $\alpha_1$ for $X=\SI{50}{g \cdot m^{-3}}$ and different value of $s$.}
\label{fig:Pialpha1}
\end{figure}
As we can expect intuitively, the larger the background turbidity is, the smaller the productivity is.
Furthermore, the productivity increases with the value of $s$ for a fixed value of turbidity $\alpha_1$.

\subsubsection{Local optimum in the case \texorpdfstring{$s=1$}{Lg}}

In reality, the depth $h$ depends on the type of reactors.
As an example, $h=\SI{0.1}{m}-\SI{0.5}{m}$ for raceway ponds, $h=\SI{1}{cm}-\SI{10}{cm}$ for tubular photobioreactors and $h=\SI{0.1}{mm}-\SI{1}{mm}$ for biofilm reactors (where the microalgal biomass is fixed on a support).
By knowing the lowest bound admissible for the reactor depth (depending on the process type), we only need to optimize the productivity in the direction of $X$.
Note that the turbidity $\alpha_1$ may change the optimal condition to maximize the surface biomass productivity $\Pi$.
Indeed, Figure~\ref{fig:PimuX} illustrates this for a reactor depth $h=\SI{0.15}{m}$.
\begin{figure}[htpb]
\centering
\includegraphics[scale=0.3]{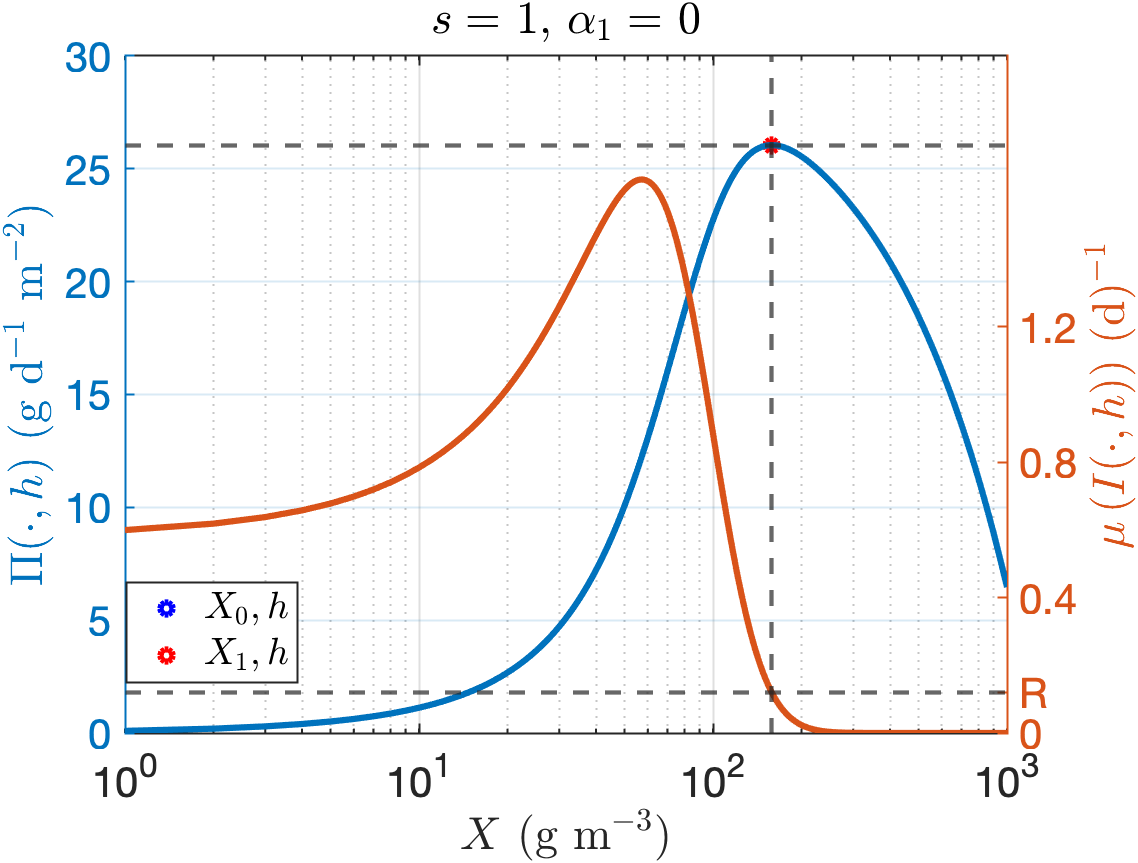}
\includegraphics[scale=0.3]{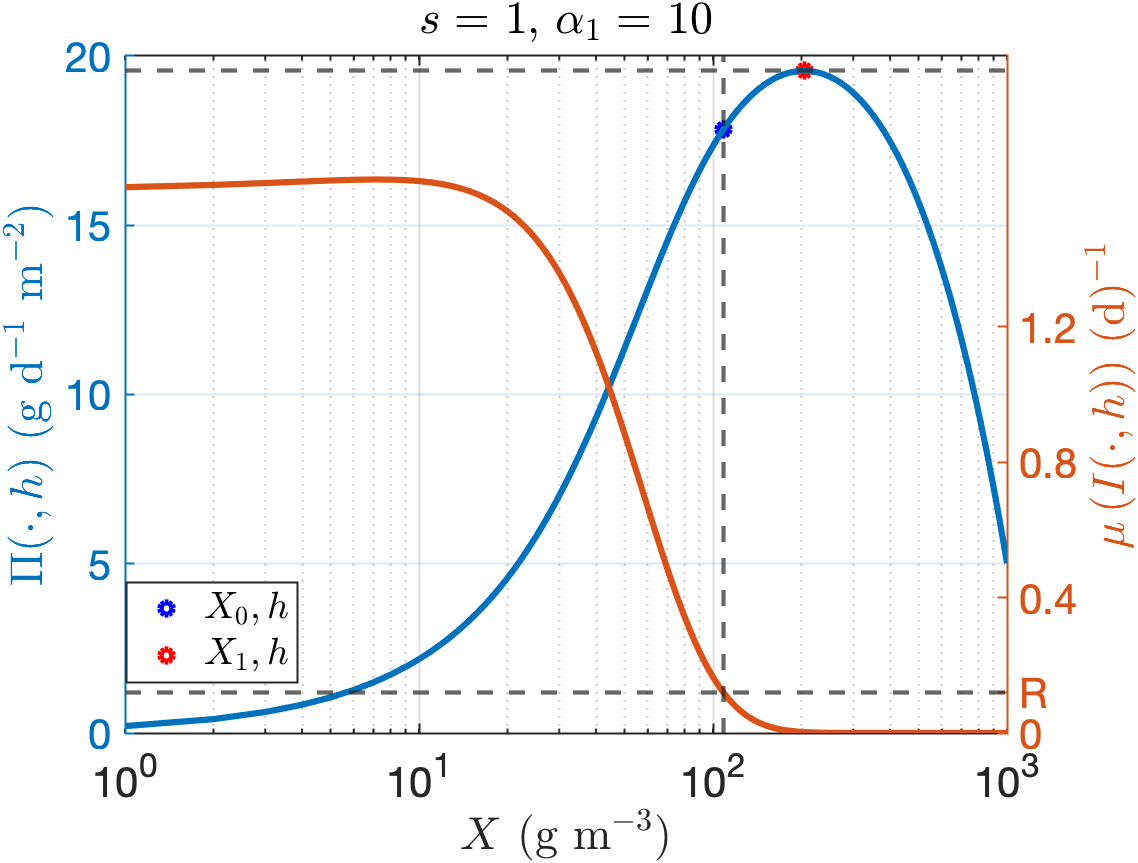}
\caption{Productivity ($\Pi$) with respect to biomass concentration ($X$) for $h = \SI{0.15}{m}$.
Left: $\alpha_1 = \SI{0}{m^{-1}}$.
Right: $\alpha_1=\SI{10}{m^{-1}}$.}
\label{fig:PimuX}
\end{figure}
Note that $X_0$ satisfies the relation $\varepsilon(X_0)h = Y_{\text{opt}}$ which also means that the net growth rate at the bottom of the reactor is zero (see the blue point in these two figures).
On the other hand, the red point $(X_1,h)$ is the optimum which maximize the surface biomass productivity $\Pi$ for this given depth $h$.
One can see that $X_0 = X_1=\SI{158.427}{g \cdot m^{-3}}$ in the case the background turbidity is zero in the system (Left), meaning that the optimum is the point which cancels the net average growth rate at the reactor bottom as we have mentioned in Corollary~\ref{cor:yoptnotur}.
However, Lemma~\ref{lem:dirX} indicates that by taking into account the background turbidity (Right), these two points are no longer the same and the optimum then satisfies $X_1= \SI{204.190}{g \cdot m^{-3}} > X_0=\SI{108.427}{g \cdot m^{-3}}$.

The global behaviour of the surface biomass productivity $\Pi$ is represented on Figure~\ref{fig:PiXh}, for $h\in(0, 1]$ and $X\in(0,1000]$.
To discuss the influence of the background turbidity, we consider two possible values, $\alpha_1=\SI{0}{m^{-1}}$ and $\alpha_1=\SI{10}{m^{-1}}$.
\begin{figure}[htpb]
\centering
\includegraphics[scale=0.3]{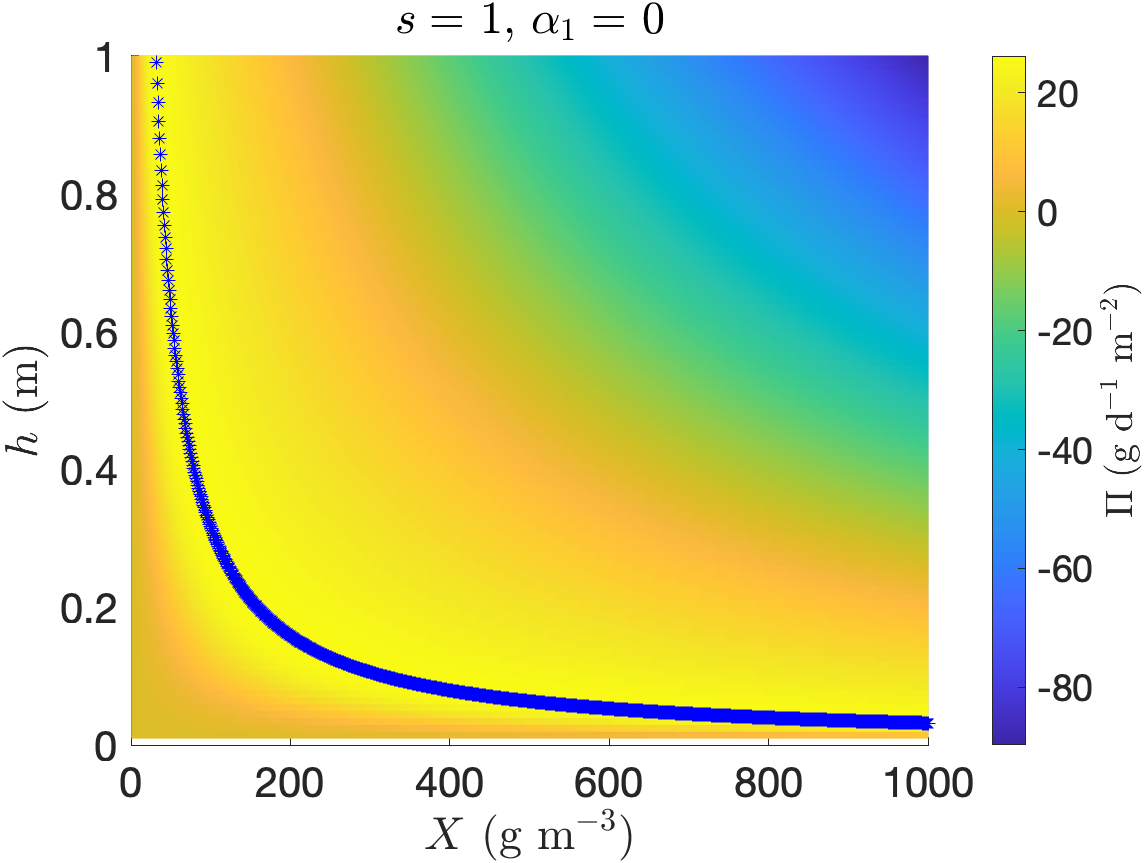}
\includegraphics[scale=0.3]{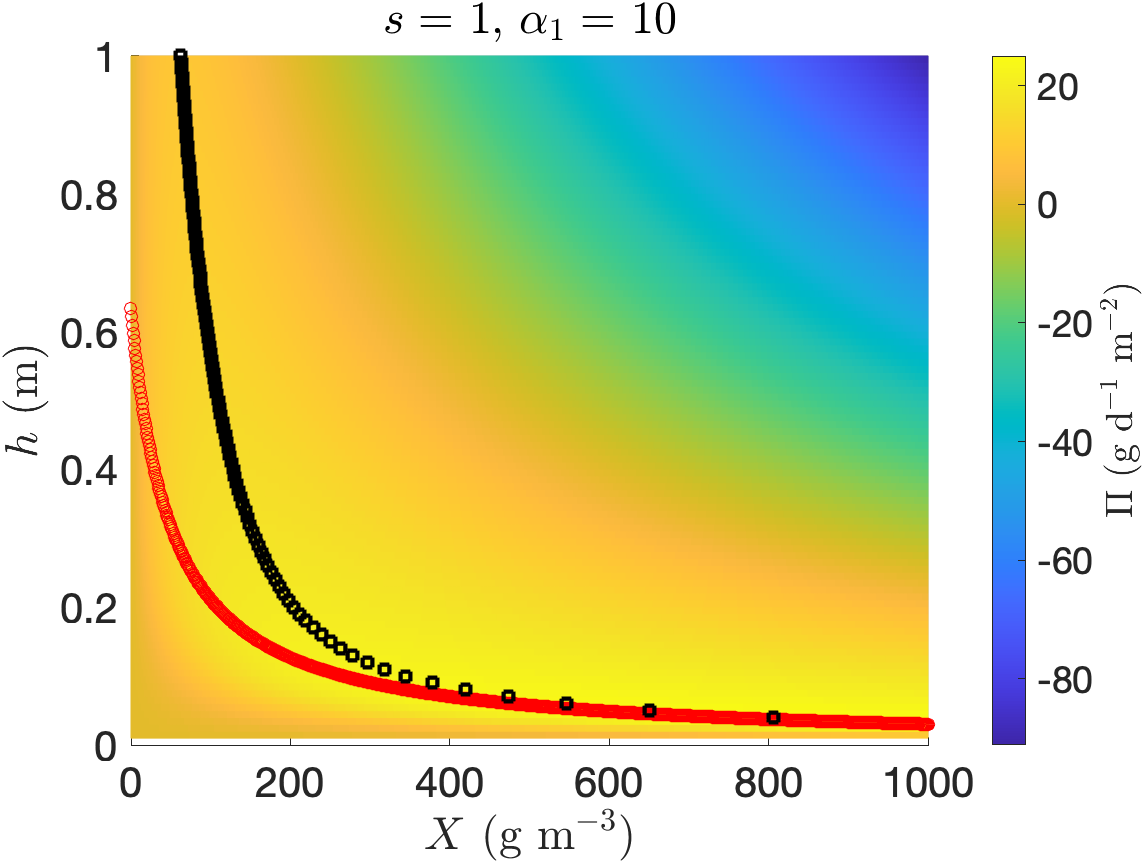}
\caption{Global behaviour of productivity ($\Pi$) with respect to depth ($h$) and biomass concentration ($X$).
Left: $\alpha_1 = \SI{0}{m^{-1}}$.
The blue stars represent the optimal couple $(X, h)$ such that $\Pi$ finds its global maximum.
Right: $\alpha_1=\SI{10}{m^{-1}}$.
The red circles represent the suboptimal couple $(X, h)$ where $\Pi$ finds its maximum in the direction of $h$ for a given $X$.
The black squares represent the suboptimal couple $(X, h)$ where $\Pi$ finds its maximum in the direction of $X$ for a given $h$.}
\label{fig:PiXh}
\end{figure}
Note that the blue points in the left figure $(X, h)$ satisfy the relation $\varepsilon(X)h = Y_{\text{opt}}$ which is also the global optimum.
However, by taking into account the background turbidity (see figure on the right), no global optimum exists as mentioned in Theorem~\ref{thm:globopt}.
Instead, for a given biomass concentration, the optimal depths can still be found using the relation $\varepsilon(X)h = Y_{\text{opt}}$ (represented by the red circles in the right figure).
For a given water depth, the optimal concentrations are obtained by cancelling the derivative of $\Pi(\cdot ,h)$ (represented by the black squares in the right figure).
Furthermore, one can observe that this two suboptima become closer when $X$ increases and $h$ decreases, meanwhile the productivity also increases in this direction. 

Let us set $X_0=\SI{50}{g \cdot m^{-3}}$, $\alpha_1=\SI{10}{m^{-1}}$ and $n_{\max}=10^4$. 
Figure~\ref{fig:Xnhn} illustrates the properties of these two sequences constructed by Algorithm~\ref{alg:opt}. 
\begin{figure}[htpb]
\centering
\includegraphics[scale=0.3]{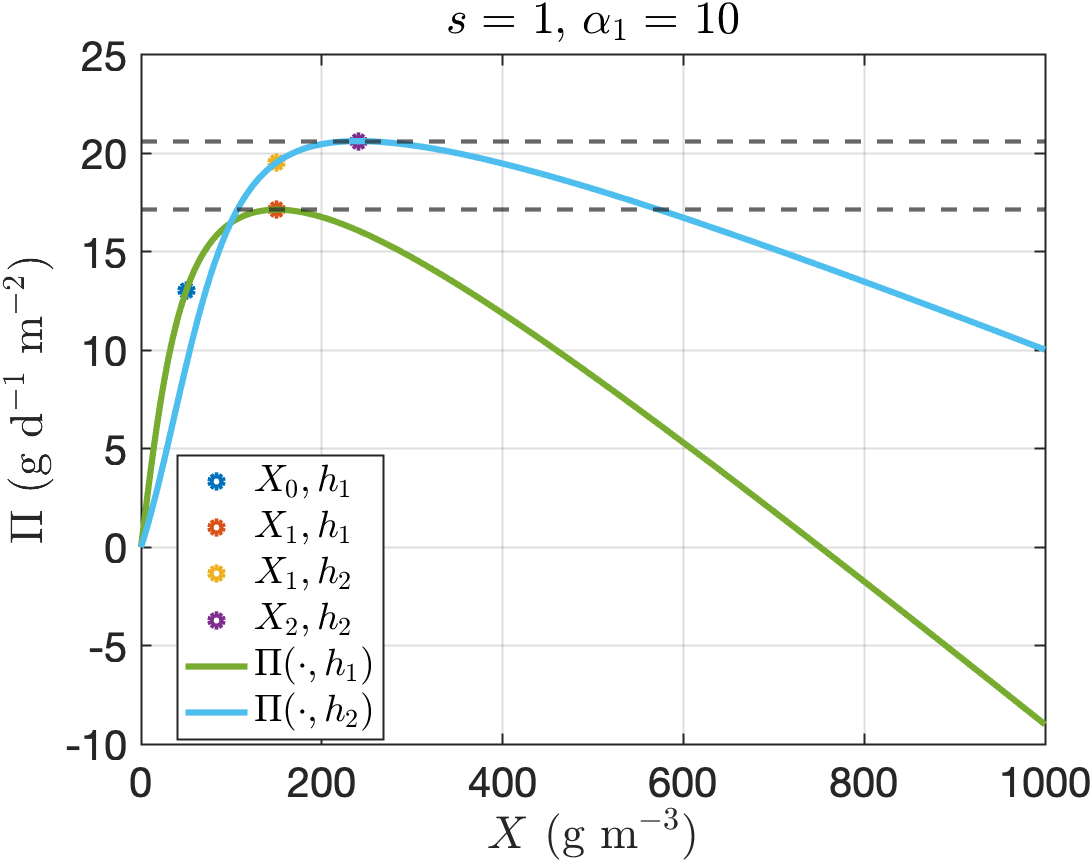}\\
\includegraphics[scale=0.28]{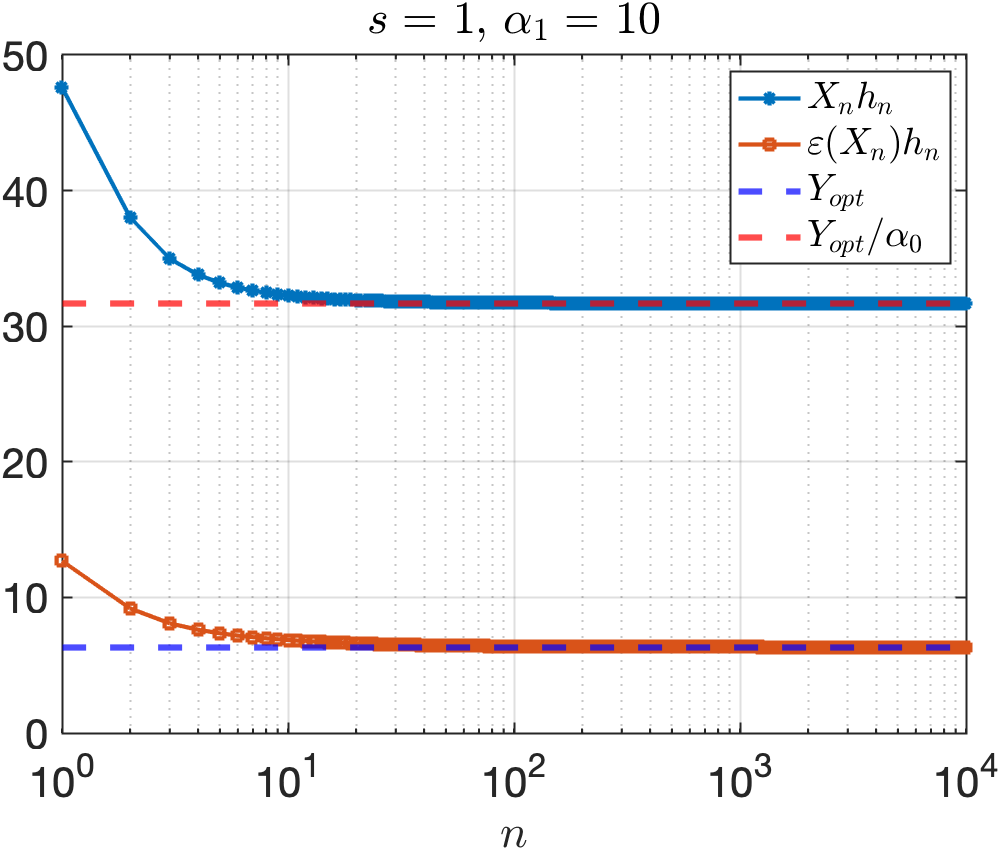}
\includegraphics[scale=0.28]{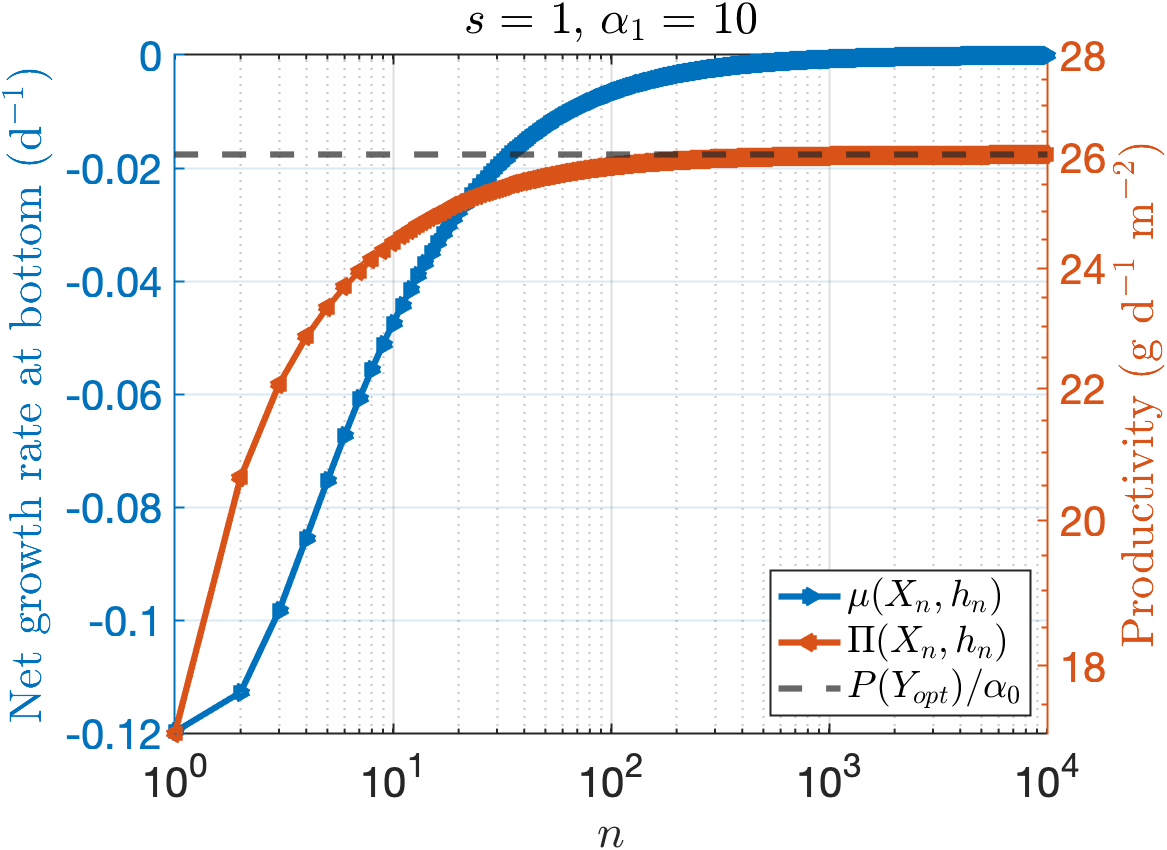}
\caption{Up: First-two elements of these two sequences.
Down Left: Surface biomass $X_n h_n$ and optical depth $\varepsilon(X_n)h_n$ for the sequence $(X_n ,h_n)_{n>0}$.
Down Right: Productivity $\Pi(X_n, h_n)$ and net growth rate at the reactor bottom $\mu(X_n, h_n) -R$  for the sequence $(X_n ,h_n)_{n>0}$.}
\label{fig:Xnhn}
\end{figure}
Starting from the figure on the top, the blue point and the yellow point are the first-two element of the sequence $(X_{n-1} ,h_n)_{n>0}$, the red point and the purple point are the first-two element of the sequence $(X_n ,h_n)_{n>0}$.
Recall that the sequence $(X_{n-1} ,h_n)_{n>0}$ always satisfies $\varepsilon(X_{n-1})h_n = Y_{\text{opt}}$ and the net growth rate at the reactor bottom is always 0.
We then only study the asymptotic behaviour of the sequence $(X_n, h_n)_{n>0}$.
As shown in bottom left figure, the surface biomass $X_n h_n$ converges to $\frac{Y_{\text{opt}}}{\alpha_0}$ and the optical depth $\varepsilon(X_n)h_n$ converges to $Y_{\text{opt}}$, as proved in Lemma~\ref{lem:limprod}.
The productivity $\Pi(X_n,h_n)$ converges to $P(Y_{\text{opt}})/\alpha_0$, see bottom right figure as proved in Theorem~\ref{thm:lim}.
Finally, the net growth rate at the reactor bottom converges to zero, which is the global optimum condition in the case where the background turbidity is 0 (see Corollary~\ref{cor:yoptnotur}).
In particular, since $(X_n, h_n)$ are the optima in the direction of $X$ for each $h_n$, one can see that the net growth rate at the reactor bottom for these optima are always negative, meaning that the compensation condition is only satisfied asymptotically.

\subsubsection{Local optimum in the case \texorpdfstring{$s<1$}{Lg}}

We start with a similar study as in Figure~\ref{fig:PimuX} in the case $s<1$.
Recall that the depth of the reactor is given by $h= \SI{0.15}{m}$ and two background turbidity values are given by $\alpha_1=\SI{0}{m^{-1}}$ and $\alpha_1=\SI{10}{m^{-1}}$.
Figure~\ref{fig:PimuXbis} illustrates the results for $s=0.365$. 
\begin{figure}[htpb]
\centering
\includegraphics[scale=0.3]{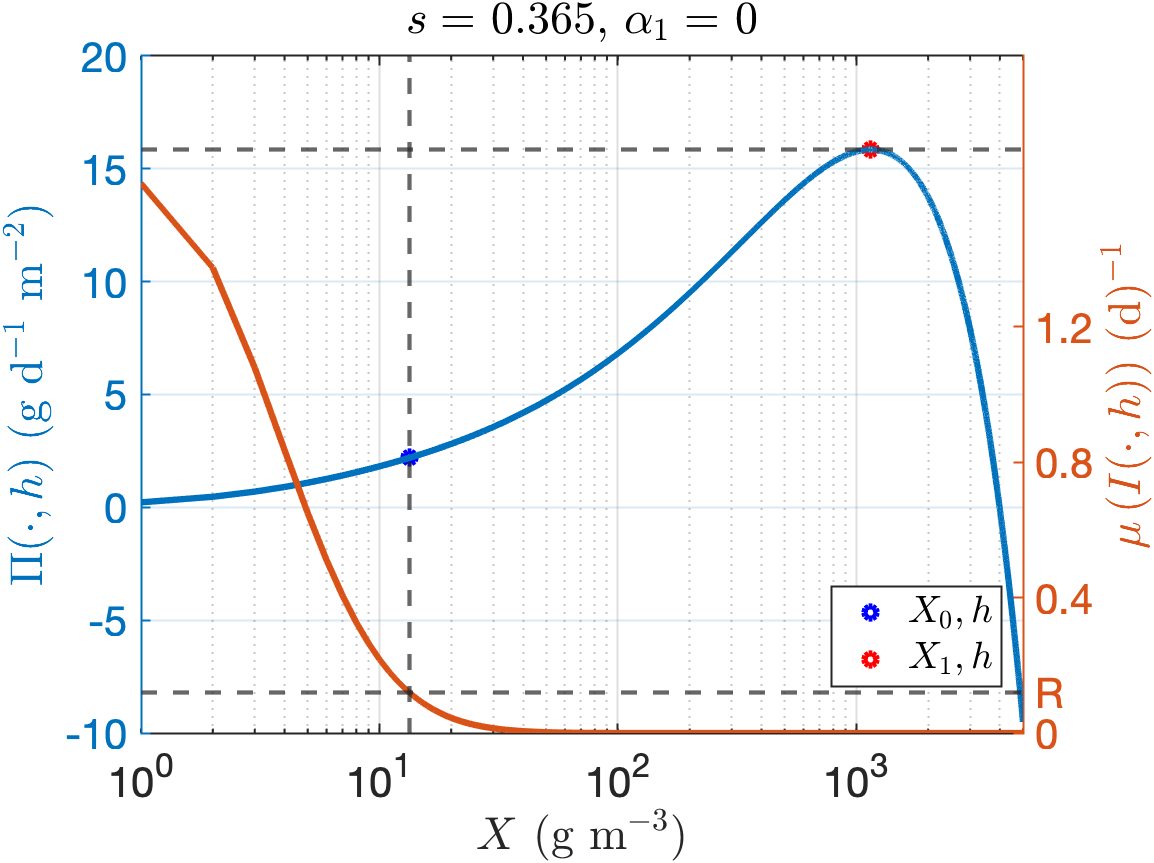}
\includegraphics[scale=0.3]{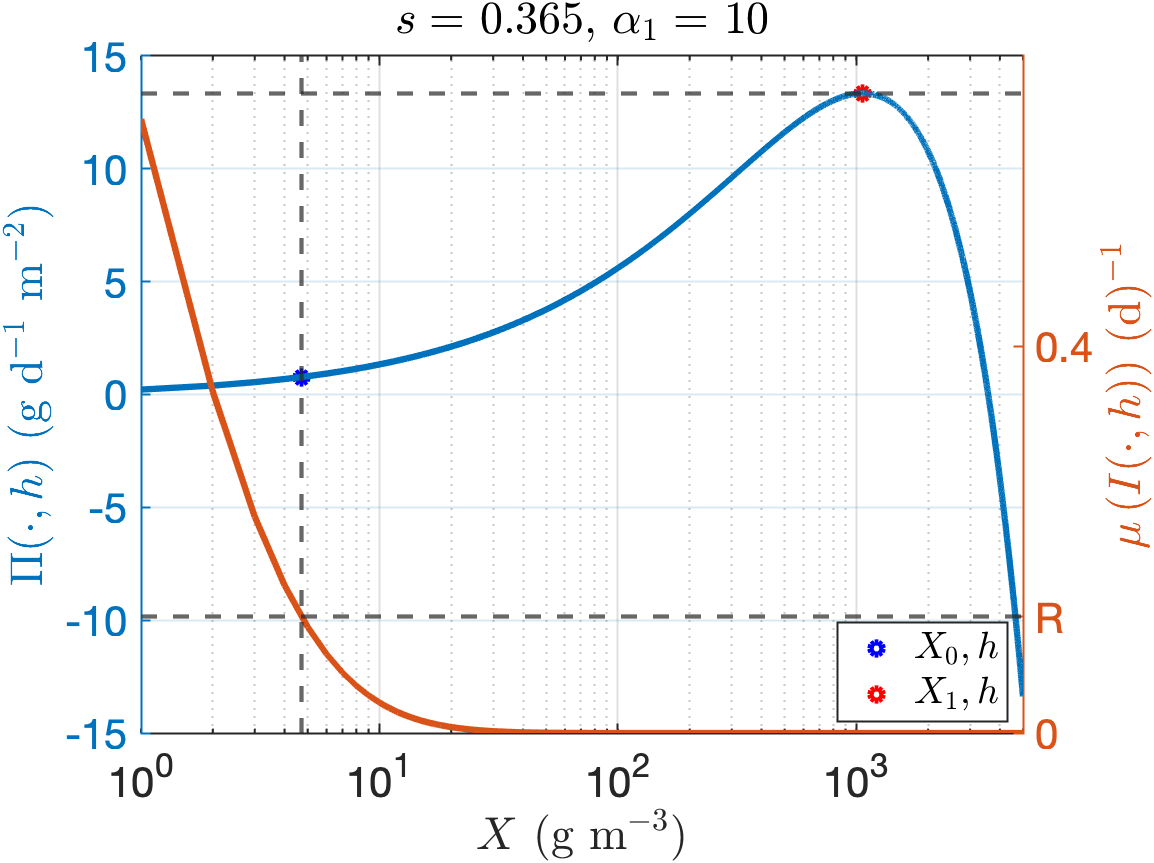}
\caption{Productivity ($\Pi$) with respect to biomass concentration ($X$) for $h = \SI{0.15}{m}$.
Left: $\alpha_1=\SI{0}{m^{-1}}$.
Right: $\alpha_1=\SI{10}{m^{-1}}$.}
\label{fig:PimuXbis}
\end{figure}
Recall that the blue point $(X_0,h)$ satisfies the relation $\varepsilon(X_0)h = Y_{\text{opt}}$ which also means that the net growth rate at the reactor bottom is zero, and the red point $(X_1,h)$ represents the optimum which maximizes the productivity for this depth $h$.
In the case $\alpha_1=\SI{0}{m^{-1}}$, we find $X_0=\SI{13.327}{g \cdot m^{-3}}$ and $X_1=\SI{1149.298}{g \cdot m^{-3}}$, whereas we obtain $X_0=\SI{4.715}{g \cdot m^{-3}}$ and $X_1=\SI{1064.574}{g \cdot m^{-3}}$ in the case $\alpha_1=\SI{10}{m^{-1}}$.
These two points do not coincide even when the background turbidity is zero, which is different from the case $s=1$.

Figure~\ref{fig:PiXhbis} presents the global behaviour of surface biomass productivity in the case $s=0.365$.
\begin{figure}[htpb]
\centering
\includegraphics[scale=0.3]{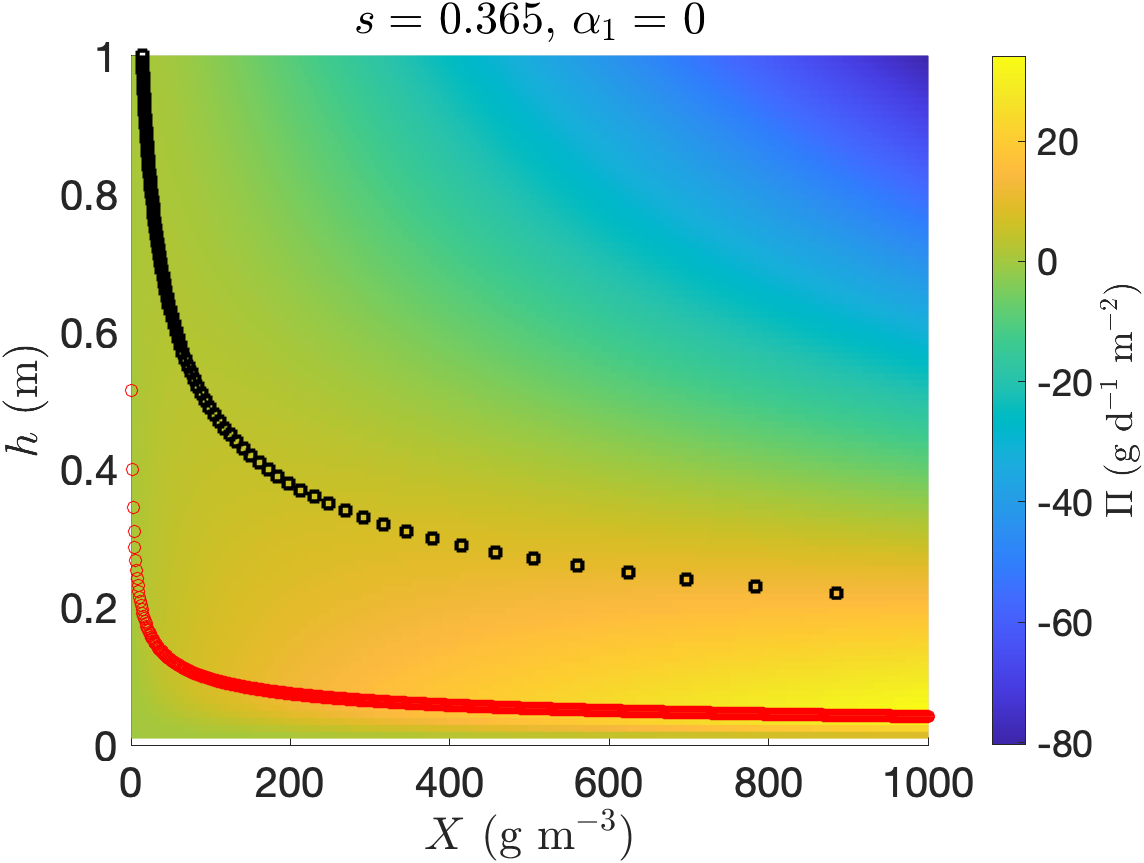}
\includegraphics[scale=0.3]{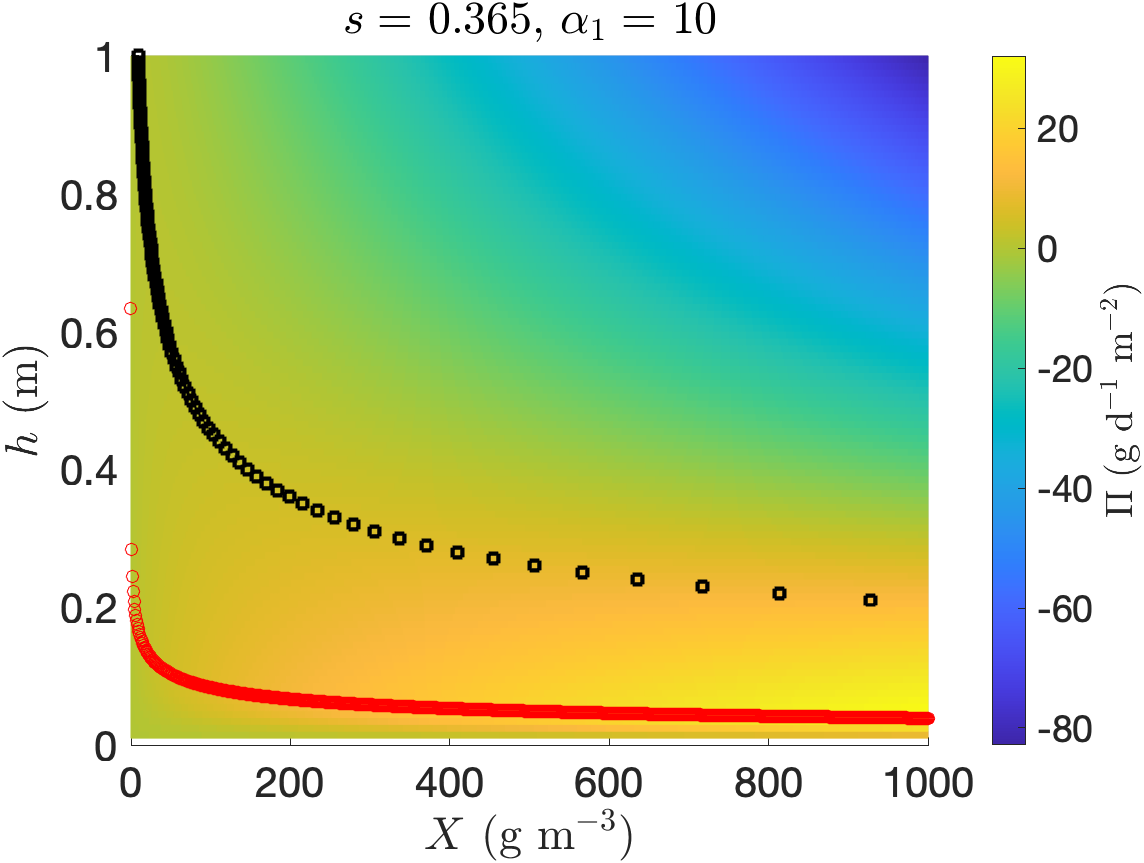}
\caption{Global behaviour of productivity ($\Pi$) with respect to depth ($h$) and biomass concentration ($X$).
Left: $\alpha_1 = \SI{0}{m^{-1}}$.
Right: $\alpha_1=\SI{10}{m^{-1}}$.
The red circles represent the suboptimal couple $(X, h)$ where $\Pi$ finds its maximum in the direction of $h$ for a given $X$.
The black squares represent the suboptimal couple $(X, h)$ where $\Pi$ finds its maximum in the direction of $X$ for a given $h$.}
\label{fig:PiXhbis}
\end{figure}
Unlike for the case $s=1$ (Figure~\ref{fig:PiXh}), the influence of the background turbidity becomes smaller when $s<1$.
However, similarly to this $s=1$ case (Right), the productivity becomes larger when the biomass concentration $X$ increases and the water depth $h$ decreases.
Furthermore, Figure~\ref{fig:Xnhnbis} shows the divergence of the productivity $\Pi$, as proved in Theorem~\ref{thm:limbis}.
\begin{figure}[htpb]
\centering
\includegraphics[scale=0.32]{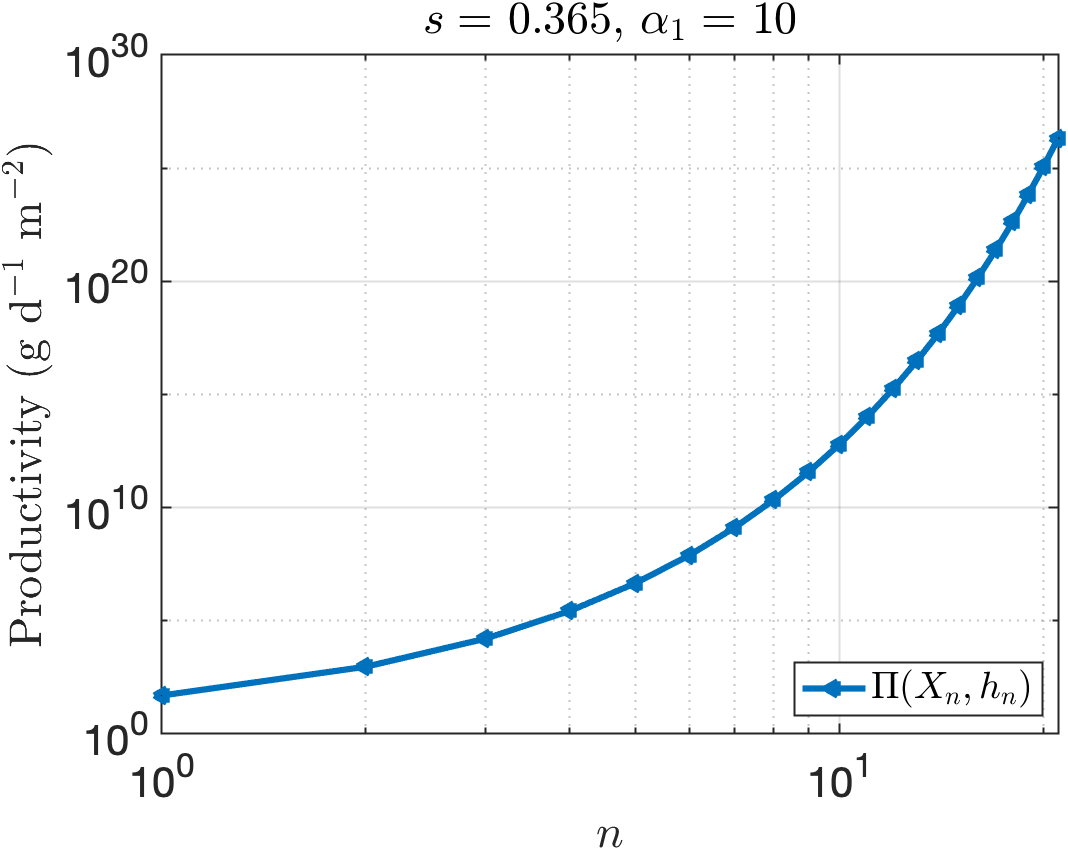}
\caption{$\Pi(X_n, h_n)$ for the sequence $(X_n ,h_n)_{n>0}$.}
\label{fig:Xnhnbis}
\end{figure}

\subsubsection{Controller test}

We present the efficiency of the controller $D$ designed in Proposition~\ref{prop:controller}.
Let us set $h=\SI{0.1}{m}$, $s=1$, $X(0)=[2500,50]\si{g.m^{-3}}$, $D_{\max}=10\mu_{\max}$ and keep other parameter settings.
Figure~\ref{fig:controller} illustrates the behaviour of the biomass concentration $X$ under our controller $D$.
\begin{figure}[htpb]
\centering
\includegraphics[scale=0.3]{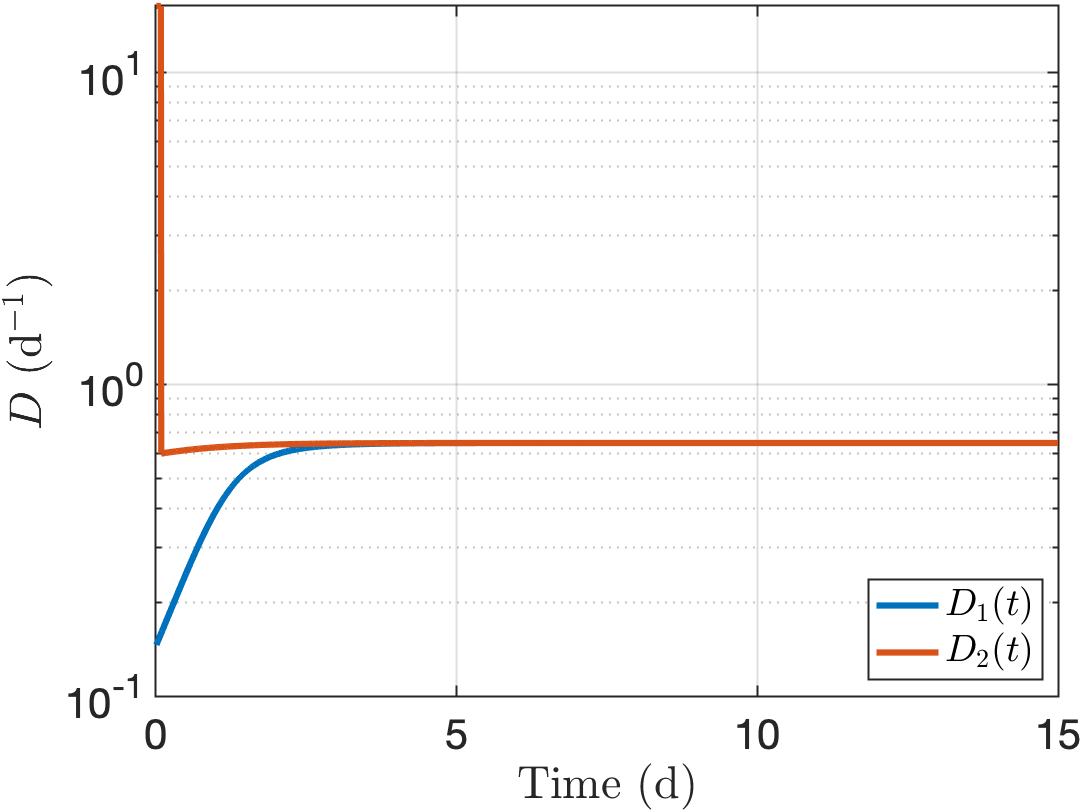}
\includegraphics[scale=0.3]{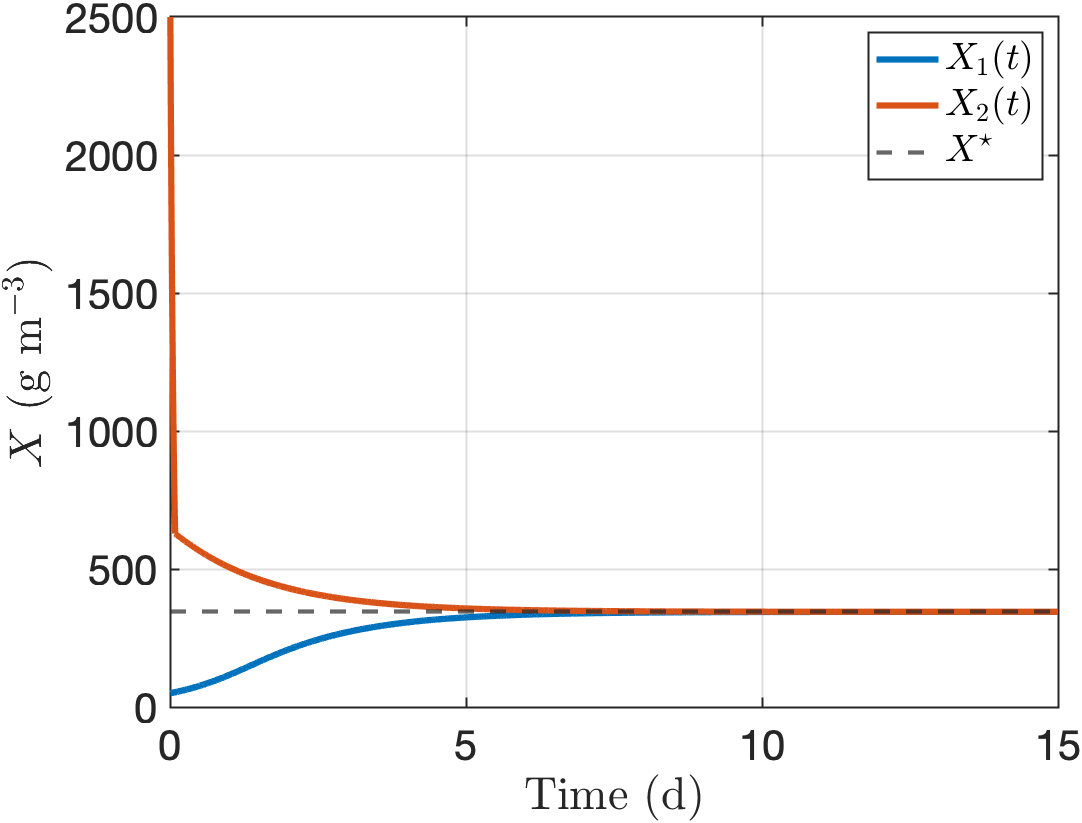}
\caption{Evolution of the biomass concentration $X$ in closed loop for two initial conditions.}
\label{fig:controller}
\end{figure}
Note that the desired biomass concentration $X^\star={\rm argmax}_{X\in\R_+} \Pi(X,h)$.
One can see that the evolution of the biomass concentration $X$ in closed loop converges to the desired optimal biomass concentration (after five days).

\section{Conclusion}

The concept of optical productivity $P$ has been defined and a global optimum $Y_{\text{opt}}$ has been found to maximize $P$.
This condition corresponds to a situation where the net growth rate at the reactor bottom is zero.
This optimum can be used to characterize the optimal water depth which maximizes surface biomass productivity $\Pi$ for a target biomass concentration value.
When the light extinction rate is affine with respect to the biomass concentration, an upper limit to the productivity is given which is obtained for an infinitely small depth and an infinitely large biomass concentration.

The proposed nonlinear controller stabilizes the biomass concentration to its optimal value $X^\star$. It could be improved by integrating an extremum seeking strategy~\cite{Liu2012,Guay2004} to automatically target the desired biomass without identifying it in advance.

\bibliographystyle{plain}
\bibliography{biblio}

\appendix

\section{Han model to Haldane description}\label{app:relation}

Let us consider the Han model~\cite{Han2002}:
\begin{equation}\label{eq:Han}
\left\{
\begin{array}{lr}
\dot{A} = -\sigma I A + \frac B{\tau},\\
\dot{B} =  \sigma I A - \frac B{\tau} + k_rC - k_d\sigma I B,\\
\dot{C} = -k_r C + k_d \sigma I B,
\end{array}
\right.
\end{equation}
where $A, B, C$ are the relative frequencies of the three possible photosynthetic states of the microalgae which satisfies 
\begin{equation}\label{eq:abc}
A+B+C=1. 
\end{equation}
Here $I$ is the photon flux density, a continuous time-varying signal. 
The other parameters are $\sigma$, that stands for the specific photon absorption, $\tau$ which is the turnover rate, $k_r$ which represents the photosystem repair rate and $k_d$ which is the damage rate. 
The system~\eqref{eq:Han} can be reduced to two equations by using~\eqref{eq:abc}.
Indeed, one can for instance eliminate $B$ in~\eqref{eq:Han} and gets
\begin{equation}\label{eq:Han_2eq}
\left\{
\begin{array}{lr}
\dot{A} = -(\sigma I + \frac 1{\tau}) A + \frac{1-C}{\tau},\\
\dot{C} = -(k_r+k_d\sigma I) C + k_d \sigma I (1-A),
\end{array}
\right.
\end{equation}
We then complete the system above with initial conditions
\begin{equation*}
\left(A(0), C(0)\right) = (A_0, C_0) \in \{(x, y) \in \R^2_+ | x + y \in [0, 1]\}.
\end{equation*}

The dynamics of the open state $A$ reaches its steady state following a process whose speed is very high compared to the dynamics of the photoinhibition state $C$~\cite{Hartmann2014} (for instance see Figure~\ref{fig:AC_evol}). 
\begin{figure}[htpb]
\centering
\includegraphics[scale=0.3]{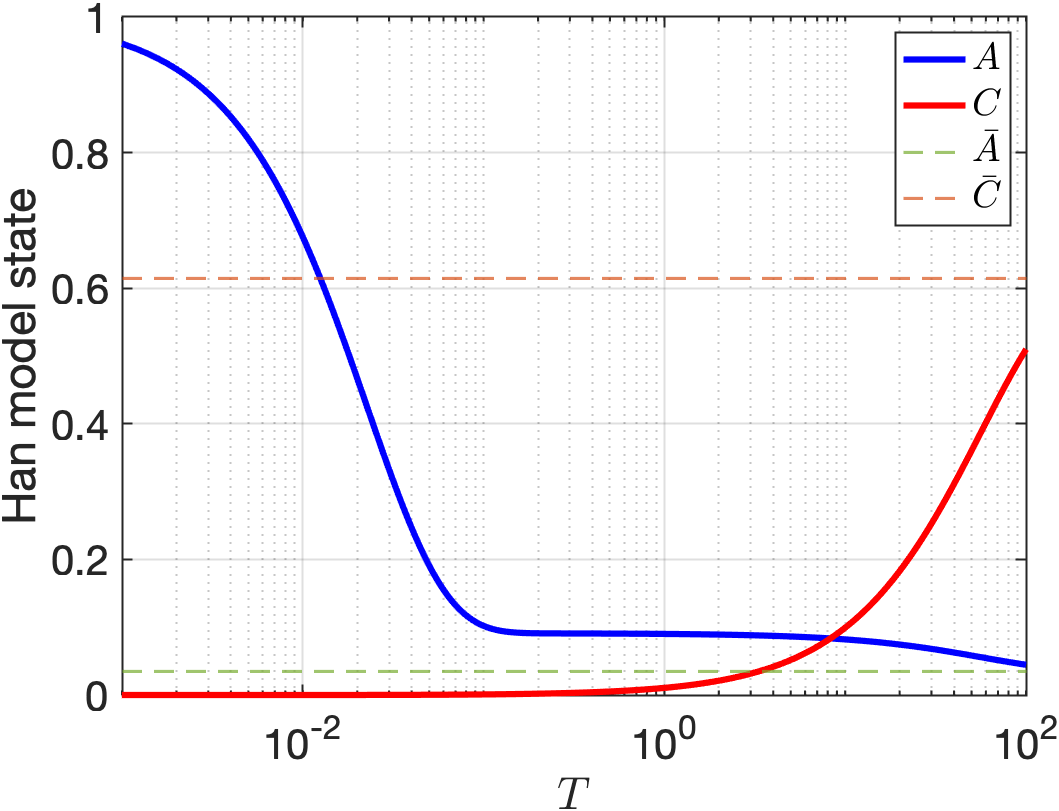}
\caption{Evolution for open state $A$ and photoinhibition state $C$ with the initial condition $(A(0),C(0)) = (1,0)$.
Note that we use an Euler Explicit scheme to solve the system~\eqref{eq:Han_2eq} by using the parameters presented in Table~\ref{tab:han}.
A $log$ scale is also used for the time variable.}
\label{fig:AC_evol}
\end{figure}
This phenomenon is mainly due to the presence of the multiplicative parameter $k_d$ which is relatively small (see Table~\ref{tab:han}, where an example of possible values for the Han parameters is given).
Since we usually focus on light variation at large time scale (larger than second) in real life applications, we can then apply a slow-fast dynamics using singular perturbation theory~\cite{Khalil2002}. 
More precisely, this consists in equating the first equation of~\eqref{eq:Han_2eq} to zero and find the pseudo steady state of $A$ as $\frac{1-C}{\tau\sigma_H I+1}$.
Replacing this into the second equation of~\eqref{eq:Han_2eq}, the previous two equations can finally be reduced to one equation, namely 
\begin{equation*}
\dot C = -(k_d\tau \frac{(\sigma I)^2}{\tau \sigma I+1} + k_r) C + k_d\tau \frac{(\sigma I)^2}{\tau \sigma I+1}.
\end{equation*}
The growth rate in the steady state of this system is then defined by
\begin{equation}\label{eq:muhan}
\mu_{\text{Han}}(I) := \frac{k \sigma I}{\frac{k_d}{k_r} \tau (\sigma I)^2 + \tau \sigma I + 1}.
\end{equation}
Finally by identifying~\eqref{eq:haldane} and~\eqref{eq:muhan} correctly, one finds
\begin{equation}\label{eq:hantohal}
\theta = k \sigma, \quad I^* = \sqrt{\frac{k_r}{k_d\tau\sigma^2}}, \quad \mu_{\max} = \frac{k \sigma}{\tau \sigma + 2 \sqrt{\frac{k_d \tau \sigma^2}{k_r}}}.
\end{equation}

\section{Explicit computations for average growth rate}\label{app:computemubar}

In this section, we provide details about the computation on $\bar\mu$.
As mentioned in Appendix~\ref{app:relation}, the growth rate described by Han model can be easily written in the Haldane description.
We choose to present the corresponding computation hereafter with Han model parameters. 
From the definition of $\bar \mu$~\eqref{eq:mubar}, one has
\begin{equation*}
\bar \mu =  \frac 1h \int_{-h}^0 \mu_{\text{Han}}(I(X, z)) \D z = \frac {k_r k \sigma}{h \varepsilon(X)} \int_{I_b}^{I_s} \frac 1{k_d \tau (\sigma I)^2 + k_r(\tau \sigma I + 1)} \D I,
\end{equation*}
where $I_b = I_s \exp(-\varepsilon(X) h)$.
Define $a= k_d\tau \sigma^2, b=k_r\tau\sigma, c=k_r$. 
According to the sign of the discriminant $\Delta$ of equation $a I^2 + b I + c$, three cases must be considered.

\begin{itemize}
\item $\Delta > 0$ : Then there exists two reel roots denoted by $d_1=\frac{-b+\sqrt{b^2-4a c}}{2a}$ and $d_2=\frac{-b-\sqrt{b^2-4a c}}{2a}$.
Hence one has
\begin{equation*}
\bar \mu = \frac {k_r k \sigma}{a h\varepsilon(X)} \Big(e_1\ln \left|\frac{I_s-d_1}{I_b-d_1}\right| + e_2\ln \left|\frac{I_s-d_2}{I_b-d_2}\right|\Big).
\end{equation*}
with $e_1+e_2 = 0$ and $e_1d_2 + e_2d_1 = -1$, i.e. $e_1=\frac 1{d_1-d_2} = \frac a{\sqrt{b^2-4a c}}$ and $e_2=\frac 1{d_2-d_1} = -\frac a{\sqrt{b^2-4a c}}$. 
Using $e_2 = -e_1$, we find
\begin{equation*}
\bar \mu = \frac {k_r k \sigma e_1}{a h\varepsilon(X)} \ln \left|\frac{(I_s-d_1)(I_b-d_2)}{(I_b-d_1)(I_s-d_2)}\right|.
\end{equation*}

\item $\Delta = 0$ : Then there exists a unique root denoted by $d = -\frac{b}{2a}$.
And one has
\begin{equation*}
\bar \mu  = \frac{k_r k \sigma}{h\varepsilon(X)} \frac{I_s-I_b}{(I_s-d)(I_b-d)} 
\end{equation*}

\item $\Delta < 0$ : Then one has
\begin{equation*}
a I^2 + b I + c =\frac{4a c - b^2}{4a}\Big(\big(\frac{ I+\frac b{2a} }{\sqrt{\frac{4a c - b^2}{4a^2}}} \big)^2 +1\Big).
\end{equation*}
Applying a change of variable by setting $y=\frac{ I+\frac b{2a} }{\sqrt{\frac{4a c- b^2}{4a^2}}}$, one gets
\begin{equation*}
\begin{split}
\bar \mu =\frac {k_r k \sigma}{h\varepsilon(X)}\frac{2}{\sqrt{4a c -b^2}}\big(\arctan(\frac{2a I_s +b}{\sqrt{4a c -b^2}})-\arctan(\frac{2a I_b +b}{\sqrt{4a c -b^2}})\big) .
\end{split}
\end{equation*}
\end{itemize}

Since one has the explicit formulation of the average growth rate $\bar \mu$, the surface biomass concentration $\Pi$ can be then computed explicitly including its derivatives. 
This is at the basis of the determination of the optimal biomass concentration $X$ for a given depth $h$.

\end{document}